\documentclass{article}
\usepackage{amssymb,latexsym,amsmath,amsthm,hyperref,comment}
\usepackage{xspace}  

\setlength\parindent{0pt}
\hyphenpenalty=10000

\newtheorem{mainnm}{Theorem}
\newtheorem{meta}[mainnm]{Theorem}
\newtheorem{atleast66}[mainnm]{Theorem}
\newtheorem{allofem}[mainnm]{Theorem}

\newtheorem{pisthree}{Proposition}
\newtheorem{foursixorninestuff}[pisthree]{Proposition}
\newtheorem{twofive}[pisthree]{Proposition}
\newtheorem{oddstuff}[pisthree]{Proposition}

\newtheorem{modtwothree}{Lemma}
\newtheorem{sevensmooth}[modtwothree]{Lemma}
\newtheorem{fivesmooth}[modtwothree]{Lemma}
\newtheorem{arbsmall}[modtwothree]{Lemma}
\newtheorem{threesmooth}[modtwothree]{Lemma}
\newtheorem{meta2}[modtwothree]{Lemma}

\newtheorem{elevensmoothnmvalues}{Corollary}
\newtheorem{sevensmoothnmvalues}[elevensmoothnmvalues]{Corollary}
\newtheorem{fivesmoothnmvalues}[elevensmoothnmvalues]{Corollary}
\newtheorem{sevenoverfifteen}[elevensmoothnmvalues]{Corollary}

\newcommand{\mfree}{$M$-free\xspace}
\newcommand{\pfull}{$P$-full\xspace}
\newcommand{\free}[1]{#1-free\xspace}
\newcommand{\full}[1]{#1-full\xspace}
\newcommand{\nalpha}{n_{\alpha}\xspace}
\newcommand{\n}[1]{n_{#1}\xspace}
\newcommand{\nm}{N_M\xspace}
\newcommand{\setA}[1]{A(#1)\xspace}
\newcommand{\setB}[1]{B(#1)\xspace}

\begin{document}
\vspace*{-2cm}
\Large
 \begin{center}
Partitions with prescribed sum of reciprocals: computational results \\ 

\hspace{10pt}

\large
Wouter van Doorn \\

\hspace{10pt}

\end{center}

\hspace{10pt}

\normalsize

\vspace{-10pt}

\centerline{\bf Abstract}
For a positive rational $\alpha$, call a set of distinct positive integers $\{a_1, a_2, \ldots, a_r\}$ an $\alpha$-partition of $n$, if the sum of the $a_i$ is equal to $n$ and the sum of the reciprocals of the $a_i$ is equal to $\alpha$. Define $\nalpha$ to be the smallest positive integer such that for all $n \ge \nalpha$ an $\alpha$-partition of $n$ exists and, for a positive integer $M \ge 2$, define $\nm$ to be the smallest positive integer such that for all $n \ge \nm$ a $1$-partition of $n$ exists where $M$ does not divide any of the $a_i$. In this paper we determine $\nm$ for all $M$, and find the set of all $\alpha$ such that $\nalpha \le 100$.

\section{Introduction and overview of results}
Let us call a set of positive integers $\{a_1, \ldots, a_r\}$ an $\alpha$-partition of $n$, if all $a_i$ are distinct, $a_1 + \ldots + a_r = n$, and $\frac{1}{a_1} + \ldots + \frac{1}{a_r} = \alpha$. For a positive integer $M \ge 2$, define an $\alpha$-partition to be \mfree if none of the $a_i$ are divisible by $M$. On the other hand, for a set of primes $P$, define an $\alpha$-partition $\{a_1, \ldots, a_r\}$ to be \pfull, if all prime factors of the $a_i$ are contained in $P$. To give an example, $\{2, 3, 6\}$ is a $1$-partition of $11$, as $2 + 3 + 6 = 11$ and $\frac{1}{2} + \frac{1}{3} + \frac{1}{6} = 1$. It is furthermore \full{$\{2, 3\}$}, as $2, 3$ and $6$ only have $2$ and $3$ as prime factors. One can note that, whenever $M$ is divisible by a prime $p \notin P$, any \pfull partition is \mfree. \\

Interest in \mfree or \pfull partitions emerges quite naturally from the study of $\alpha$-partitions in the first place. For example, when Graham in \mbox{\cite{gr}} obtained the surprising result that $1$-partitions of $n$ exist for all $n \ge 78$, he actually proved that \pfull $1$-partitions of $n \ge 78$ exist, with $P$ equal to $\{2, 3, 5, 7, 11, 13\}$. And in the same paper he then immediately uses this stronger result to prove the existence of $1$-partitions of large $n$ with arbitrarily large smallest element. \\

Where \mfree and \pfull partitions really shine however, is in the study of $\nalpha$; the smallest positive integer such that there exists an $\alpha$-partition of $n$, for all $n \ge \nalpha$. For example, it turns out that \mfree $\alpha$-partitions for just a single $\alpha$ (which will often be $\alpha = 1$ in our case), can help pin down the value of $\n{\alpha'}$, for infinitely many other values of $\alpha'$. This paper is therefore dedicated to proving the existence of such \mfree $1$-partitions, and in a follow-up paper we will then apply these results to bound quantities like $\nalpha$ and $\left|\{\alpha : \nalpha \le n \}\right|$. \\ 

And on the topic of \mfree $1$-partitions, in \mbox{\cite{ks2}} K\"ohler and Spilker improve on Graham's original result, by showing that \full{$\{2, 3, 5\}$} $1$-partitions of $n$ exist for all $n \ge 113$. In particular this implies that \mfree $1$-partitions exist for all $n \ge 113$ and all $M$ divisible by a prime larger than $5$. They then asked whether this can be further improved to the existence of \full{$\{2, 3\}$} $1$-partitions for all large enough $n$. In Section \ref{fiveparts} we will answer this question in the affirmative, by showing that \full{$\{2, 3\}$} $1$-partitions exist for all $n \ge 531$. \\

This leaves open the question whether \mfree $1$-partitions exist for large enough $n$, when $M$ can be written as $M = 2^x 3^y$. In Section \ref{foursixornineparts} we will then deal with the case $x + y \ge 2$, and show that \mfree $1$-partitions exist for all $n \ge 183$, if $M$ is divisible by $4, 6$ or $9$. So the only two remaining cases are $M = 2$ and $M = 3$. \\

For these latter two cases we come to a negative result, as we will show that a \free{$2$} $\alpha$-partition of $n$ implies $n \equiv \alpha \pmod{8}$, while a \free{$3$} $\alpha$-partition of $n$ implies $n \equiv \alpha \pmod{3}$. On the other hand, for $n \equiv 1 \pmod{3}$ we prove in Section \ref{threeparts} that \full{$\{2, 5\}$} $1$-partitions exist if $n \ge 3634$. And in Section \ref{oddparts} we show for $n \equiv 1 \pmod{8}$ that \full{$\{3, 5, 7\}$} $1$-partitions exist if $n \ge 3609$, answering a question raised by K\"ohler and Spilker in \mbox{\cite{ks}}. Since all $M \ge 2$ are then dealt with, this concludes our results on \mfree $1$-partitions. \\

Let $\setA{n}$ be defined as the set of all positive rationals $\alpha$ with $\nalpha \le n$. After showing $1 \in \setA{78}$ in \mbox{\cite{gr}}, Graham proved more generally that the infinite union $\bigcup_{n \ge 1} \setA{n}$ is equal to the set of all positive rationals. To date, this is (to the best of our knowledge) still all that is known about these sets $\setA{n}$. In Section \ref{nalpha} we will ameliorate this state of affairs by fully determining $\setA{n}$ for all $n \le 100$. In particular, we will see that $\setA{65}$ is the empty set, while $\setA{100}$ turns out to contain $4314$ rationals, with the set difference $\setA{n} \setminus \setA{n-1}$ non-empty for all $n$ with $66 \le n \le 100$. \\

Perhaps the most interesting result we obtain along the way is a metatheorem we prove in Section \ref{metastuff}. For a given set $S$ of positive rationals and any property $Q$ that an $\alpha$-partition might have, this metatheorem provides a general framework for how a proof can be structured, if one wants to show that for all large enough $n$ and all $\alpha \in S$, an $\alpha$-partition of $n$ with property $Q$ exists. \\

With Theorem $6$ in \mbox{\cite{squares}}, Alekseyev proved a special case of such a metatheorem, in order to show that for all positive integers $n > 15707$ a set $\{a_1, \ldots, a_r\}$ of positive integers exists with $\frac{1}{a_1} + \ldots + \frac{1}{a_r} = 1$, $a_1^2 + \ldots + a_r^2 = n$, and $a_i \ge 6$ for all $i$. \\

Finally, a few words on the notation we use. If $A = \{a_1, \ldots, a_r\}$ is a set of positive integers and $m \in \mathbb{N}$, then $\sum A$ denotes $a_1 + \ldots + a_r$, $\sum A^{-1}$ is defined as the sum $\frac{1}{a_1} + \ldots + \frac{1}{a_r}$, and we write $mA$ for the set $\{ma_1, \ldots, ma_r\}$.

\section[Nm values]{Values of $\nm$ for all $M \ge 2$}
Define $\nm$ to be the smallest positive integer such that for all integers $n \ge \nm$ an \mfree $1$-partition of $n$ exists. As it turns out, for $M = 2$ and $M = 3$ such an $\nm$ does not exist, due to the following lemma.

\begin{modtwothree} \label{modtwothree}
Let $A$ be a set of integers. If all elements of $A$ are odd, then $\sum A \equiv \sum A^{-1} \pmod{8}$. Similarly, if none of the integers in $A$ are divisible by $3$, then $\sum A \equiv \sum A^{-1} \pmod{3}$.
\end{modtwothree}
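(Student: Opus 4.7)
The plan is to reduce the whole statement to the elementary identities $a^2\equiv 1\pmod 8$ (valid for every odd $a$) and $a^2\equiv 1\pmod 3$ (valid for every $a$ coprime to $3$). Before doing anything, I would spend a sentence fixing the interpretation of the congruences: when every element of $A$ is odd, the denominator of $\sum A^{-1}$ in lowest terms is odd, hence $\sum A^{-1}$ represents a well-defined element of $\mathbb{Z}/8\mathbb{Z}$ via multiplication by the modular inverse of that denominator; likewise, when no element of $A$ is divisible by $3$, the quantity $\sum A^{-1}$ lives unambiguously in $\mathbb{Z}/3\mathbb{Z}$.

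Given this, the first half proceeds as follows. For every odd integer $a$, one has $a^2\equiv 1\pmod 8$ (a routine check on the four odd residues $1,3,5,7$), so $a^{-1}\equiv a\pmod 8$ in $(\mathbb{Z}/8\mathbb{Z})^\times$. Summing this congruence over $a\in A$ gives
\[
\sum A^{-1}\;=\;\sum_{a\in A} a^{-1}\;\equiv\;\sum_{a\in A}a\;=\;\sum A\pmod 8,
\]
which is the first claim.

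The second half is exactly the same argument one modulus down. For every integer $a$ not divisible by $3$, either $a\equiv 1$ or $a\equiv 2\pmod 3$, and in both cases $a^2\equiv 1\pmod 3$, so $a^{-1}\equiv a\pmod 3$. Summing over $a\in A$ again yields $\sum A^{-1}\equiv\sum A\pmod 3$.

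There is no genuine obstacle in the argument; the only subtlety worth flagging in the write-up is the legitimacy of reducing the rational number $\sum A^{-1}$ modulo $8$ or $3$, which is why I would open the proof with that justification rather than treat it as self-evident.
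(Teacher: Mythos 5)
Your proposal is correct and is essentially identical to the paper's own proof, which likewise rests on the fact that every integer coprime to $M'$ (with $M' = 8$ or $3$) is its own inverse modulo $M'$ and then sums the congruence $a \equiv \frac{1}{a} \pmod{M'}$ over $A$. Your extra sentence justifying why the rational number $\sum A^{-1}$ is well-defined modulo $8$ or $3$ is a reasonable piece of care that the paper leaves implicit.
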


\begin{proof}
With $M'$ equal to either $8$ or $3$, this follows from the fact that every integer coprime to $M'$ is its own inverse modulo $M'$. That is, $a \equiv \frac{1}{a} \pmod{M'}$ for all integers $a$ coprime to $M'$, and hence $\sum A \equiv \sum A^{-1} \pmod{M'}$. 
\end{proof}

In particular, if $\sum A^{-1} = 1$, then $\sum A \equiv 1 \pmod{M'}$. For $M = 2$ and $M = 3$ we will therefore redefine $\nm$ to the smallest positive integer such that an \mfree $1$-partition of $n$ exists for $n = \nm$, and for all integers $n \ge \nm$ with $n \equiv 1 \pmod{8}$ or $n \equiv 1 \pmod{3}$ respectively. We are then able to state the full theorem on the values of $\nm$, that we will then prove in the coming sections.

\begin{mainnm} \label{main1}
The values of $\nm$ can be found in the following table. For all positive integers $M \ge 2$ not present in this table we have $\nm = 78$.

\begin{table}[h]
\centering
\def\arraystretch{1.3}
\begin{tabular}{|l|l|l|l|l|l|l|l|l|l|l|l|}
\hline
$M$ & $2$         & $3$   & $4$   & $5$   & $6$   & $7$  & $8$   & $9$  & $10$  & $11$  & $12$  \\ \hline
$\nm$ & $\le 737$ & $154$ & $155$ & $126$ & $183$ & $97$ & $101$ & $91$ & $108$ & $92$  & $98$ \\ \hline \hline
$M$ & $14$        & $15$  & $16$  & $18$  & $20$  & $21$ & $22$  & $24$ & $28$  & $30$  & $33$ \\ \hline
$\nm$ & $81$      & $108$ & $78$  & $91$  & $106$ & $81$ & $92$  & $80$ & $81$  & $108$ & $92$ \\ \hline
\end{tabular}
\end{table}
\end{mainnm}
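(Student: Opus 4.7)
The plan is to prove Theorem \ref{main1} by unifying the structural constructions developed across the remaining sections with finite computational verification. The key observation is that any $P$-full $1$-partition whose prime support is disjoint from the prime divisors of $M$ is automatically $M$-free, so the infinite family of $M$'s reduces to four exhaustive structural classes. For $M$ divisible by a prime $p \ge 5$, the $\{2,3\}$-full $1$-partitions for $n \ge 531$ promised by Section \ref{fiveparts} yield the conclusion; for $M$ divisible by $4$, $6$, or $9$, Section \ref{foursixornineparts} handles $n \ge 183$; for $M = 3$, Section \ref{threeparts} gives $\{2,5\}$-full constructions for $n \ge 3634$ with $n \equiv 1 \pmod{3}$ (the congruence restriction being forced by Lemma \ref{modtwothree}); and for $M = 2$, Section \ref{oddparts} gives $\{3,5,7\}$-full constructions for $n \ge 3609$ with $n \equiv 1 \pmod{8}$.

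With those thresholds in hand, each specific entry in the table would be verified by a finite search. For each $M$ appearing in the table I would exhibit an $M$-free $1$-partition of $N_M$ and of every $n$ strictly between $N_M$ and the relevant threshold, while also ruling out $M$-free $1$-partitions of every $n$ strictly less than $N_M$ in the appropriate residue class. Both tasks reduce to enumerating subsets of $\{1,\ldots,n\}$ with prescribed sum and prescribed reciprocal sum, which can be carried out by dynamic programming, although the search space grows quickly with $n$.

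The uniform claim that $N_M = 78$ for every $M$ not in the table is handled by a similar sweep: since each such $M$ falls into one of the first two structural cases above, it suffices to check that for every pair $(M, n)$ with $M$ absent from the table, $78 \le n$, and $n$ below the relevant threshold ($531$ in the generic case, $183$ when $4$, $6$, or $9$ divides $M$), an $M$-free $1$-partition of $n$ exists. Because being $M$-free is a vacuous condition whenever $M > n$, only finitely many pairs $(M, n)$ need genuine inspection, so the check terminates.

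The hardest part will be the $M = 2$ case, where the theorem only asserts $N_2 \le 737$. Matching this bound requires producing $2$-free $1$-partitions for every $n \equiv 1 \pmod{8}$ in the interval $[737, 3608]$, roughly $360$ values sitting below the $\{3,5,7\}$-full threshold; it also requires explaining why the bound $737$ may fail to be tight, since there is no matching lower bound ruling out $2$-free $1$-partitions below $737$. The remaining entries all rest on the cleaner thresholds $183$ and $531$, so their finite verification stays comfortably within computational reach.
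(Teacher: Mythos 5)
Your plan is essentially the paper's own proof: the theorem is assembled from exactly this case split — prime divisor at least $5$ handled by the $P$-full constructions, divisibility by $4$, $6$ or $9$ by Section \ref{foursixornineparts}, and the congruence-restricted constructions for $M\in\{2,3\}$ — together with finite computer verification of the gaps and of the off-table $M$ (the paper sharpens your generic threshold of $531$ to the K\"ohler--Spilker bound $113$ when $M$ has a prime factor exceeding $5$, and replaces your vacuousness sweep with divisibility lemmas reducing the off-table $M$ to finitely many divisibility conditions, but these are optimizations rather than different ideas). One small correction: since the set of $m$ for which ``every $n\ge m$ admits an \mfree $1$-partition'' is upward closed, pinning down the exact value of $\nm$ only requires ruling out an \mfree $1$-partition of $\nm-1$ (or of the largest admissible residue below $\nm$ when $M\in\{2,3\}$), not of every $n<\nm$; many smaller $n$ do admit such partitions, so the sweep as you literally describe it would fail.
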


We will have to split up the proof of Theorem \ref{main1} into a few different cases, depending on the existence of certain divisors of $M$. For example, in \mbox{\cite{ks2}} K\"ohler and Spilker prove that \full{$\{2, 3, 5\}$} $1$-partitions exist for all $n \ge 113$. This in principle already takes care of all $M$ with a prime divisor larger than $5$, and in Section \ref{largestuff} we will do just that. In the sections afterwards we will then deal with the case where the largest prime factor of $M$ is at most $5$. As for $M = 2$, it is possible to combine a brute-force search with arguments similar to ones used in \cite{shiu} (which, for example, imply that every \mfree $1$-partition of $n \le 729$ can only use integers whose largest prime factor is either $47$ or at most $31$) to prove that $\nm$ is exactly equal to $737$, but this becomes hard to verify. We therefore only claim the upper bound $\nm \le 737$ (as opposed to equality) for $M = 2$.

\section[Large divisors]{When $M$ has a prime divisor larger than $5$} \label{largestuff}
In \mbox{\cite{ks2}} K\"ohler and Spilker prove that $91$ is the only positive integer $n \ge 78$ for which a \full{$\{2, 3, 5, 7\}$} $1$-partition does not exist. This is due to the fact that $\{3, 4, 6, 11, 12, 22, 33\}$ is the unique $1$-partition of $91$. This immediately gives the following corollary.

\begin{elevensmoothnmvalues} \label{elevensmoothnm}
If $M \in \{11, 22, 33\}$, then $\nm = 92$. And if the largest prime divisor of $M$ is at least $11$ with $M \notin \{11, 22, 33\}$, then $\nm = 78$.
\end{elevensmoothnmvalues}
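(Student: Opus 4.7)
My plan is to leverage the K\"ohler--Spilker result cited just above the statement: for every $n \ge 78$ other than $n = 91$, a \full{$\{2,3,5,7\}$} $1$-partition of $n$ exists, and the unique $1$-partition of $91$ is $\{3, 4, 6, 11, 12, 22, 33\}$. Combined with the introductory observation that any \full{$\{2,3,5,7\}$} $1$-partition is automatically \mfree whenever $M$ is divisible by some prime $p \notin \{2,3,5,7\}$ (which applies to every $M$ in the corollary, since each has a prime factor $\ge 11$), this reduces the entire statement to understanding what happens at the single value $n = 91$.

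For $n = 91$ there is no choice: the partition $\{3, 4, 6, 11, 12, 22, 33\}$ is the only candidate, so I would just check for which $M$ it is \mfree. Since $M$ has a prime factor $\ge 11$, in particular $M \ge 11$, so I would enumerate the divisors of the elements $3, 4, 6, 11, 12, 22, 33$ that are $\ge 11$ and themselves have a prime factor $\ge 11$. The only such divisors are $11$, $22$ and $33$, so the partition is \mfree precisely when $M \notin \{11, 22, 33\}$.

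Putting the two pieces together gives the two cases. If $M \in \{11, 22, 33\}$, then no \mfree $1$-partition of $91$ exists, so $\nm \ge 92$, while the \full{$\{2,3,5,7\}$} partitions cover all $n \ge 92$, giving $\nm = 92$. If $M$ has a prime factor $\ge 11$ and $M \notin \{11, 22, 33\}$, then $\{3, 4, 6, 11, 12, 22, 33\}$ handles $n = 91$ and K\"ohler--Spilker handles every other $n \ge 78$; combined with the trivial lower bound $\nm \ge \n{1} = 78$ (since $n = 77$ admits no $1$-partition at all, and an \mfree $1$-partition is still a $1$-partition), this yields $\nm = 78$.

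There is no real obstacle: the main content of the corollary is already packaged in the K\"ohler--Spilker result, and the only remaining work is the small finite case analysis at $n = 91$, which boils down to listing which divisors of $3, 4, 6, 11, 12, 22, 33$ have a prime factor $\ge 11$.
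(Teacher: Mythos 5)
Your proposal is correct and matches the paper's (unwritten, "immediate") argument exactly: apply the K\"ohler--Spilker result that \full{$\{2,3,5,7\}$} $1$-partitions exist for all $n \ge 78$ except $n = 91$, note these are automatically \mfree for any $M$ with a prime factor $\ge 11$, and settle $n = 91$ by inspecting its unique $1$-partition $\{3, 4, 6, 11, 12, 22, 33\}$. No gaps.
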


When the largest prime divisor of $M$ is (exactly) equal to $7$, we apply the following lemma.

\begin{sevensmooth} \label{sevensmooth}
If the largest prime divisor of $M$ is equal to $7$, then either $M \le 28$, or $M$ is divisible by $35, 42, 49, 56$ or $63$.
\end{sevensmooth}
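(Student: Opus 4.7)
The plan is to do a straightforward case analysis based on the prime factorization of $M$. Since every prime factor of $M$ is at most $7$ and $7$ does occur, write $M = 2^a \cdot 3^b \cdot 5^c \cdot 7^d$ with $d \ge 1$. I would argue by contrapositive: assume $M$ is divisible by none of $35, 42, 49, 56, 63$, and show this forces $M \le 28$.

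Each forbidden divisor translates into a restriction on the exponents. Since $7 \mid M$, not having $35 = 5 \cdot 7$ as a divisor forces $c = 0$; similarly, $49 \nmid M$ gives $d = 1$, $56 = 2^3 \cdot 7 \nmid M$ gives $a \le 2$, and $63 = 3^2 \cdot 7 \nmid M$ gives $b \le 1$. Finally, $42 = 2 \cdot 3 \cdot 7 \nmid M$ forces at least one of $a$ or $b$ to vanish. So $M \in \{2^a \cdot 3^b \cdot 7 : a \le 2,\ b \le 1,\ ab = 0\}$.

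The remaining four candidates are $M = 7, 14, 21, 28$, each of which is indeed $\le 28$. This completes the proof. There is no real obstacle here; the only thing to be careful about is not to forget any of the five divisibility conditions when reading off the constraints on $a, b, c, d$, and to remember that $7 \mid M$ already, so e.g.\ $35 \nmid M$ really is equivalent to $5 \nmid M$ rather than to a weaker statement.
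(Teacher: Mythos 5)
Your proof is correct and is essentially the same elementary case analysis on the prime factorization as the paper's, just organized via the contrapositive with explicit exponent bounds instead of nested cases on $\frac{M}{7}$. The argument even pins down the surviving values as exactly $\{7, 14, 21, 28\}$, consistent with how the paper later uses this lemma.
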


\begin{proof}
If $\frac{M}{7}$ is divisible by $5$ or $7$, then $M$ is divisible by $35$ or $49$. On the other hand, if $\frac{M}{7}$ is not divisible by $5$ or $7$, then the largest prime divisor of $\frac{M}{7}$ is at most $3$. Now there are three different cases to consider: either $\frac{M}{7}$ is not divisible by $2$, or $\frac{M}{7}$ is not divisible by $3$, or $\frac{M}{7}$ is divisible by both $2$ and $3$. In the first case, either $M \le 21$ or $63$ divides $M$. In the second case, either $M \le 28$ or $56$ divides $M$. And in the third case, $M$ is divisible by $42$.
\end{proof}

\begin{sevensmoothnmvalues} \label{sevensmoothnm}
If $M = 7$, then $\nm = 97$. If $M \in \{14, 21, 28\}$, then $\nm = 81$. And if the largest prime divisor of $M$ is equal to $7$ with $M > 28$, then $\nm = 78$.
\end{sevensmoothnmvalues}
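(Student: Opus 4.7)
The three clauses share a common positive ingredient: in every case $7 \mid M$, so any \full{$\{2, 3, 5\}$} $1$-partition is automatically \mfree, and the K\"ohler-Spilker result quoted in the introduction then furnishes \mfree $1$-partitions for all $n \ge 113$ uniformly. What remains is therefore a finite analysis on the window $78 \le n \le 112$, together with the sharpness statements for the first two clauses.

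For the third clause, Lemma \ref{sevensmooth} lets us replace \mfree by \free{$D$} for some $D \in \{35, 42, 49, 56, 63\}$ dividing $M$, since $D \mid a_i$ whenever $M \mid a_i$. The one awkward value in the finite window is $n = 91$, the sole exception in the \full{$\{2, 3, 5, 7\}$} K\"ohler-Spilker result; but its unique $1$-partition $\{3, 4, 6, 11, 12, 22, 33\}$ has maximum element $33 < 35 \le D$, so no $a_i$ can be divisible by $D$. This single observation disposes of $n = 91$ for every admissible $M$ at once. For each remaining $n \in \{78, \ldots, 112\} \setminus \{91\}$, one starts from a \full{$\{2, 3, 5, 7\}$} $1$-partition supplied by K\"ohler-Spilker and checks, for each of the five values of $D$, that a variant avoiding all multiples of $D$ is available from the (finite) list of such partitions.

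For the first two clauses, the positive direction on the remaining small windows---$97 \le n \le 112$ when $M = 7$, and $81 \le n \le 112$ when $M \in \{14, 21, 28\}$---is handled by exhibiting explicit \mfree $1$-partitions for each $n$. The main obstacle is the sharpness: one must show that $n = 96$ admits no $7$-free $1$-partition, and that $n = 80$ admits no $M$-free $1$-partition for each of $M \in \{14, 21, 28\}$ separately (since \free{$14$} and \free{$21$} are not comparable to \free{$28$}). Unlike the positive half this cannot be settled by displaying a single partition, and, barring a clever modular or divisibility obstruction, it reduces to an exhaustive computer enumeration of the finitely many $1$-partitions of $80$ and $96$; this enumeration is what pins down the exact values of \nm rather than merely upper bounds.
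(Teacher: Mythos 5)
Your proposal is correct and follows essentially the same route as the paper: reduce to the finite window $78 \le n \le 112$ via the K\"ohler--Spilker result for $n \ge 113$, verify existence computationally on that window, and obtain sharpness by exhaustively enumerating the $1$-partitions of $80$ (which is unique, namely $\{2, 4, 10, 15, 21, 28\}$) and of $96$ (there are four, all containing a multiple of $7$). Your observation that $n = 91$ is handled for all $D \ge 35$ at once because its unique $1$-partition has maximum element $33$ is a nice explicit touch, but the overall structure matches the paper's proof.
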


\begin{proof}
On the author's GitHub page\footnote{See https://github.com/Woett/Partition-data.} one can find all $1$-partions of $n$ for all $n \le 200$. In particular, if $M$ is divisible by $35, 42, 49, 56$ or $63$, then \mfree $1$-partitions of $n$ exist if $78 \le n \le 112$. As for $M \in \{14, 21, 28\}$, we note that $\{2, 4, 10, 15, 21, 28\}$ is the unique $1$-partition of $80$, while \mfree $1$-partitions of $n$ exist if $81 \le n \le 112$. And for $M = 7$, there are a total of four $1$-partitions of $96$, but all of them contain an integer divisible by $M$, while \mfree $1$-partitions of $n$ exist if $97 \le n \le 112$. Combining all this with the proof by K\"ohler and Spilker \mbox{\cite{ks2}} that \full{$\{2, 3, 5\}$} $1$-partitions exist for all $n \ge 113$, finishes the proof.
\end{proof}

\section{A metatheorem} \label{metastuff}
Since most of the proofs in the rest of this paper have essentially the same structure, we will first prove a kind of metatheorem that shows how specific proofs fit into a more general framework. \\

Let $S$ be a set of positive rationals such that for all $\alpha \in S$ we want to prove that for all large enough $n$, $\alpha$-partitions of $n$ exist with a certain property $Q$. To prove this, we choose for every $\alpha \in S$ a positive integer $l$, positive integers $m_1, \ldots, m_l$, rationals $\beta_1, \ldots, \beta_{l}$ and sets of positive integers $A_1, \ldots, A_{l}$ (with $A_i$ non-empty if $m_i = 1$), with the following five properties:

\begin{enumerate}
	\item For all $i$, $\beta_i \in S$.
	\item For every positive integer $n$, there exists an $i$ with $n \equiv \sum A_i \pmod{m_i}$.
	\item For all $i$, $\alpha = \sum A_i^{-1} + \frac{\beta_i}{m_i}$.
	\item For all $i$ and every $\beta_i$-partition $B$ with property $Q$, we have $A_i \cap m_i B = \emptyset$.
	\item For all $i$ and every $\beta_i$-partition $B$ with property $Q$, the union $A_i \cup m_i B$ also has property $Q$.
\end{enumerate}

\begin{meta} \label{meta}
Assume that for all $\alpha \in S$ such $m_i, \beta_i, A_i$ exist, satisfying the above five properties. Further assume that a positive integer $X$ exists such that for all $\alpha \in S$ and all $n$ with $X \le n \le \max_{i, \alpha} \big(\sum A_i + m_i (X-1)\big)$ an $\alpha$-partition of $n$ exists with property $Q$. Then $\alpha$-partitions of $n$ with property $Q$ exist for all $\alpha \in S$ and all $n \ge X$.
\end{meta}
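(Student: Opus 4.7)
The plan is to prove this by strong induction on $n$, with the inductive statement being that for every $\alpha' \in S$, an $\alpha'$-partition of $n'$ with property $Q$ exists for every $X \le n' < n$. Writing $Y := \max_{i, \alpha}\bigl(\sum A_i + m_i(X-1)\bigr)$, the base case $X \le n \le Y$ is supplied directly by the hypothesis on $X$.

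For the inductive step, I fix $n > Y$ and $\alpha \in S$. Using property 2, I pick an index $i$ with $n \equiv \sum A_i \pmod{m_i}$, so that $n = \sum A_i + m_i k$ for some nonnegative integer $k$. The choice $n > Y$ gives $m_i k > m_i(X-1)$, hence $k \ge X$; and a short case check shows $k < n$ (if $m_i \ge 2$ then $k \le n/m_i < n$, while if $m_i = 1$ then $A_i$ is nonempty by hypothesis, so $\sum A_i \ge 1$). Applying the inductive hypothesis to $\beta_i \in S$ (guaranteed by property 1) at the integer $k$, I obtain a $\beta_i$-partition $B$ of $k$ with property $Q$.

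It then remains to verify that $C := A_i \cup m_i B$ is an $\alpha$-partition of $n$ with property $Q$. Disjointness of the union, which is what makes $C$ a set of distinct positive integers, is exactly property 4. The sum is $\sum C = \sum A_i + m_i \sum B = \sum A_i + m_i k = n$, and the reciprocal sum is $\sum C^{-1} = \sum A_i^{-1} + \frac{1}{m_i}\sum B^{-1} = \sum A_i^{-1} + \frac{\beta_i}{m_i} = \alpha$ by property 3. Finally, property 5 gives that $C$ has property $Q$, completing the induction.

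There is no real obstacle here; the five conditions are visibly designed to make the telescoping $A_i \cup m_i B$ do precisely what one needs. The two spots where one must be careful are the lower bound $k \ge X$ (which is the reason $Y$ is defined with the term $m_i(X-1)$ rather than $m_i X$) and the strict upper bound $k < n$ (which is why $A_i$ is required to be nonempty when $m_i = 1$, since otherwise the recursion on $\beta_i$-partitions of $n$ itself would be circular). Both fall out of the hypotheses immediately.
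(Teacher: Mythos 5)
Your proof is correct and follows essentially the same route as the paper: induction on $n$ with the base case supplied by the hypothesis on $X$, choosing $i$ via property 2, writing $n = \sum A_i + m_i n'$, and verifying that $A_i \cup m_i B$ works using properties 3--5. You are in fact slightly more careful than the paper in explicitly checking $k < n$ (and noting why $A_i$ must be nonempty when $m_i = 1$), which the paper leaves implicit.
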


\begin{proof}
By induction we may assume that $n > \max_{i, \alpha} \big(\sum A_i + m_i (X-1)\big)$ is a positive integer such that for all $\alpha \in S$ and all $n'$ with $X \le n' < n$ an $\alpha$-partition of $n'$ with property $Q$ exists. The goal is to prove that for all $\alpha \in S$ an $\alpha$-partition of $n$ with property $Q$ exists as well. So let $\alpha \in S$ be given, and let $i$ be such that $n \equiv \sum A_i \pmod{m_i}$. By writing $n = \sum A_i + m_i n'$, we claim $n' \ge X$. This latter inequality follows, because if $n' \le X-1$, then $n \le \sum A_i + m_i (X-1)$, contrary to the assumption $n > \max_{i, \alpha} \big(\sum A_i + m_i (X-1)\big)$. We may therefore apply the induction hypothesis to $n'$, so let $B$ be a $\beta_i$-partition of $n'$ with property $Q$. Then we claim that the union $A := A_i \cup m_i B$ is an $\alpha$-partition of $n$ with property $Q$. \\

Indeed, $A_i$ and $m_i B$ are disjoint by the fourth property above, while $A$ has property $Q$ by the fifth property. We get that the sum $\sum A^{-1}$ of reciprocals is equal to $\sum A_i^{-1} + \sum (m_i B)^{-1} = \sum A_i^{-1} + \frac{\beta_i}{m_i} = \alpha$ by the third property. And finally, the sum $\sum A$ is equal to $\sum A_i + \sum m_iB = \sum A_i + m_i n' = n$.
\end{proof}

To give an example, let us say $S = \{1 \}$ and let $Q$ be the property of not containing $1$ or $39$. We can then define $l = 2$, $m_1 = m_2 = 2$, $\beta_1 = 1$, $\beta_2 = 1$, $A_1 = \{3, 7, 78, 91 \}$ and $A_2 = \{2 \}$. The reader can then verify that the five properties listed above all hold, where for the fourth property one should realize that $2B$ only contains even integers, but -due to property $Q$- does not contain $2$ or $78$. \\

By Theorem \ref{meta} it is then sufficient to find a positive integer $X$ and $1$-partitions of $n$ with property $Q$ for all $n$ with $X \le n \le \max_{i} \big(\sum A_i + m_i (X-1)\big) = 2X + 177$, and it turns out that $X = 78$ works. This is how one can recreate Graham's original proof \mbox{\cite{gr}} that every positive integer $n \ge 78$ has a $1$-partition with property $Q$, as a specific instance of our metatheorem. \\

With $m = m_1 = m_2$, one could have also written the above proof more succinctly as the following table, which in principle contains all necessary data, besides the verification for all $n$ with $X \le n \le 2X + 177$. 

\begin{table}[ht]
\centering
\def\arraystretch{1.3} 
\begin{tabular}{|c|c|c|c|c|c|}
\hline
$\alpha$   & $m$  & $\beta_1$  & $\beta_2$  & $A_1$                & $A_2$     \\ \hline \hline
$1$        & $2$  & $1$        & $1$        & $\{3, 7, 78, 91 \}$  & $\{2 \}$  \\ \hline
\end{tabular}
\end{table}

For most of our proofs we will actually have $l = m_1 = \ldots = m_l$, in which case we will denote the integer that is equal to all of these by $m$. Moreover, the sets $A_i$ will then often be such that $m$ does not divide any of the elements in $A_i$, which makes verifying the fourth property trivial. \\

Another thing we would like to note in regards to Theorem \ref{meta} is that the easiest type of $\alpha$ to deal with are generally those with $\alpha < 2$ and $\alpha - 1 \in S$. For those $\alpha$, one can (unless adding $1$ to a $\beta_1$-partition ruins property $Q$) simply choose $m = 1$, $\beta_1 = \alpha - 1$ and $A_1 = \{1 \}$. And as long as $X > 1$, this works for $\alpha = 2$ as well. In fact, adding $\alpha + 1$ to $S$ for some $\alpha \le 1$ is one example of many where it can be helpful to increase the size of $S$. Even though increasing $S$ means we need to find more $m_i, \beta_i, A_i$, it also means we have more $\beta_i$ to work with, in order to satisfy the third property. Going back to Graham's proof, we could have also used $S = \{1, \frac{4}{3}, 2\}$, together with the following proof table.

\begin{table}[ht]
\centering
\def\arraystretch{1.3} 
\begin{tabular}{|c|c|c|c|c|c|}
\hline
$\alpha$ & $m$           & $\beta_1$  & $\beta_2$  & $A_1$     & $A_2$               \\ \hline \hline
$1$      & $2$           & $4/3$      & $2$        & $\{3 \}$  & $\emptyset$         \\ \hline
$4/3$    & $2$           & $2$        & $4/3$      & $\{3 \}$  & $\{3, 5, 9, 45 \}$  \\ \hline
$2$      & $1$           & $1$        & -          & $\{1\}$   & -                   \\ \hline
\end{tabular}
\end{table}

With this data it can be checked that $X = 79$ works. And not only do we now prove something about $\alpha = \frac{4}{3}$ and $\alpha = 2$ as well, $\max_{i, \alpha} \sum A_i$ has decreased from $179$ to $62$ in the process. The latter means that we have to verify fewer partitions to get the induction proof started. \\

And on the topic of verifying partitions, usually with an induction proof you start out with one or more base cases and then do the induction step. However, for a proof in the style of Theorem \ref{meta} it often make sense to do the induction step at the start, as for the base cases you have to check all $n$ with $X \le n \le \max_{i, \alpha} \big(\sum A_i + m_i (X-1)\big)$. But for this latter quantity to make sense, you first have to figure out what the values of $m_i, \beta_i$ and $A_i$ are going to be. \\

In any case, for our research the computer has been an invaluable tool, both for finding $m_i, \beta_i, A_i$ for every $\alpha \in S$, and especially for verifying that appropriate $\alpha$-partitions exist for all $n$ with $X \le n \le \max_{i, \alpha} \big(\sum A_i + m_i (X-1)\big)$, for some $X$. Hence the title of this paper.

\section[Small divisors]{When $M$ is divisible by $5$} \label{fiveparts}
In \mbox{\cite{ks2}} K\"ohler and Spilker ask whether every large enough integer $n$ has a $1$-partition $A$ such that the largest prime factor of $a$ is at most $3$ for all $a \in A$. This turns out to be true.

\begin{pisthree} \label{pthree}
If $M$ is divisible by $5$, then \mfree $1$-partitions exist for all $n \ge 126$. Moreover, \full{$\{2, 3\}$} $1$-partitions exist for all $n \ge 531$.
\end{pisthree}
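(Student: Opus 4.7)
The plan is to apply the metatheorem (Theorem \ref{meta}) in two separate instances, one for each assertion. For the first claim, take $S = \{1\}$ and let $Q$ be the property that no element of the partition is divisible by $5$. Since $5$-freeness is preserved under multiplication by any integer coprime to $5$, it suffices to choose each modulus $m_i$ coprime to $5$ and each set $A_i$ consisting of $5$-free integers, at which point properties 4 and 5 of the metatheorem hold automatically. The task then reduces to exhibiting, for each residue class modulo $m_i$, a $5$-free set $A_i$ satisfying $\sum A_i^{-1} + \beta_i/m_i = 1$ with $\beta_i = 1 \in S$. A natural attempt uses $m = 2$: for even $n$ one can take $A_1 = \{2\}$, forcing $\beta_1 = 1$; for odd $n$ one searches for a $5$-free set $A_2$ of odd sum with $\sum A_2^{-1} = 1/2$. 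With such a table fixed, one verifies computationally that $5$-free $1$-partitions of $n$ exist for every $n$ in the finite range $X \le n \le \max_i\bigl(\sum A_i + m_i(X-1)\bigr)$, with $X = 126$.

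For the second claim on \full{$\{2,3\}$} $1$-partitions with $n \ge 531$, let $Q$ be $3$-smoothness. To preserve this under $B \mapsto m_i B$, both $m_i$ and every element of $A_i$ must themselves be $3$-smooth. Because $3$-smooth integers are sparse, it is unlikely that $S = \{1\}$ alone admits enough choices of $(m_i, \beta_i, A_i)$ to simultaneously cover every residue class modulo some small $m_i$ (property 2) and to solve $\sum A_i^{-1} + \beta_i/m_i = \alpha$ (property 3). The natural remedy, as the excerpt's discussion of Graham's proof already foreshadows, is to enlarge $S$ to a finite closed set of $3$-smooth rationals (for instance including $\alpha = 4/3$, $\alpha = 2$, and possibly others), writing a separate proof table for each $\alpha \in S$ and checking that every $\beta_i$ appearing in any table lies in $S$.

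The main obstacle is in this second part: locating a finite set $S$ of $3$-smooth rationals and an accompanying collection of $3$-smooth tables that jointly satisfy properties 1--3 for every $\alpha \in S$. This is a combinatorial search best performed by computer, and the scarcity of $3$-smooth integers forces the tables (and hence the base-case range) to be relatively large, which is why the threshold $X = 531$ is so much bigger than the $126$ of the first part. Once the tables are fixed, the metatheorem reduces everything to a finite case check: for each $\alpha \in S$ and each $n$ with $X \le n \le \max_{i,\alpha}\bigl(\sum A_i + m_i(X-1)\bigr)$, exhibit a \full{$\{2,3\}$} $\alpha$-partition of $n$, which the author's GitHub code can enumerate by brute force. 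The value $X = 531$ then emerges as one more than the largest $n$ in the induction's finite range for which no \full{$\{2,3\}$} $1$-partition exists.
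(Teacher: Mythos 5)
Your outline for the second claim matches the paper's strategy in spirit: the paper enlarges $S$ to a finite set of $\{2,3\}$-smooth rationals, namely $S_3 = \{\frac{4}{9}, \frac{6}{9}, \ldots, \frac{12}{9}, 1, 2\}$, chooses $m = 2$ for all $\alpha \neq 2$ (and $m=1$, $A_1 = \{1\}$ for $\alpha = 2$), uses sets $A_i$ consisting only of powers of $3$, and closes the induction with a computational base case (with $X = 814$, not $531$; the value $531$ is only the threshold below which \full{$\{2,3\}$} $1$-partitions fail to exist, and the finite verification extends up to $2X+10$). So for that half your plan is essentially the paper's.

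There is, however, a genuine gap in your treatment of the first claim. You assert that choosing $m_i$ coprime to $5$ and $A_i$ consisting of $5$-free integers makes properties 4 and 5 hold automatically. That is true for property 5 (the union is still $5$-free) but false for property 4, which demands $A_i \cap m_i B = \emptyset$: being $5$-free says nothing about whether an element of $A_i$ equals $m_i b$ for some $b \in B$. Worse, your specific instantiation cannot be repaired within $S = \{1\}$: with $m = 2$ and $\beta_2 = 1$ you need $\sum A_2^{-1} = \tfrac{1}{2}$, and any set of integers with reciprocal sum $\tfrac12$ must contain an even element $2c$ (a sum of reciprocals of odd numbers has odd denominator in lowest terms). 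Then $2c \in 2B$ whenever $c \in B$, which mere $5$-freeness does not exclude — this is exactly why Graham's version of this argument strengthens $Q$ to forbid $1$ and $39$. The paper sidesteps the issue entirely by enlarging $S$ so that every $A_i$ can be taken to consist of odd numbers (powers of $3$), making disjointness from $2B$ automatic. Finally, note that the paper does not run a separate induction for the $5$-free claim at all: since any \full{$\{2,3\}$} partition is $5$-free, the first claim for $n \ge 531$ follows from the second, and only the finite range $126 \le n \le 530$ needs a direct computational check. Your separate induction for part one is therefore both unnecessary and, as sketched, unsound without a strengthened property $Q$.
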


\begin{proof}
For an odd prime $p$, define the sets $S_p := \{\frac{4}{p^2}, \frac{6}{p^2}, \ldots, \frac{2p^2 - 2p}{p^2}, 1, 2\}$ and $P_p := \{2, p\}$. Our proof will then fit in the framework of Theorem \ref{meta}, with $p = 3$, $S = S_p$ and the property $Q$ the property of being \full{$P_p$}. For the rest of this proof we will write $p$ instead of $3$, in order to come back to this proof later on and find out what needs to be changed in order to make it work for $p > 3$ as well. \\

In any case, as we mentioned in Section \ref{metastuff}, for $\alpha = 2$, we can choose $m = 1$, $\beta_1 = 1$ and $A_1 = \{1 \}$, while for all $\alpha \neq 2$, we choose $m = 2$. With $\alpha = \frac{s}{p^2}$ (where $4 \le s \le 2p^2 - 2p$), the rationals $\beta_1$ and $\beta_2$, and the sets $A_1$ and $A_2$, will then depend on the value of $s$. \\

For example, for $s \le p^2-p$ we choose $\beta_1 = \frac{2s-2}{p^2}$ and $\beta_2 = \frac{2s}{p^2}$, which we claim work with $A_1 = \{p^2 \}$ and $A_2 = \emptyset$. Let us check the five listed properties to verify this. First of all, $\beta_1, \beta_2 \in S_p$, since $6 \le 2s - 2 < 2s \le 2p^2 - 2p$. As for the second property, $\sum A_1 = p^2 \equiv 1 \pmod{2}$, while $\sum A_2 = 0 \equiv 0 \pmod{2}$. Thirdly, $\alpha = \frac{s}{p^2}$ is indeed equal to $\sum A_1^{-1} + \frac{\beta_1}{2} = \frac{1}{p^2} + \frac{2s-2}{2p^2} = \frac{2s}{2p^2} = \sum A_2^{-1} + \frac{\beta_2}{2}$. And finally, the fourth and fifth property follow because all sets $A_i$ that we create in this proof will only contain powers of $p$. \\

As for $\alpha = \frac{s}{p^2}$ with $p^2 - p < s \le 2p^2 - 2p$, there are three different cases we need to consider. For all these three types of $\alpha$ we once again need to choose $\beta_1$, $\beta_2$, $A_1$ and $A_2$ in such a way that the five properties hold. And analogously to what we saw in Section \ref{metastuff}, we can succinctly write this data down in a table. The motivated reader can then verify that in all three cases the five properties do indeed hold.

\begin{table}[ht]
\centering
\def\arraystretch{1.3}
\begin{tabular}{|c|c|c|c|c|}
\hline
$\alpha = sp^{-2}$             & $\beta_1p^2$  & $\beta_2p^2$ & $A_1$       & $A_2$          \\ \hline \hline
$p^2 - p < s \le p^2$          & $2s-2p$       & $2s-2p-2$    & $\{p \}$    & $\{p, p^2 \}$  \\ \hline
$s = p^2 + 1$                  & $2p^{-2}$     & $2p^2-2p$    & $\{p^2 \}$  & $\{p, p^2 \}$  \\ \hline
$p^2 + 3 \le s \le 2p^2 - 2p$  & $2s-2p^2$     & $2s-2p^2-2$  & $\{1 \}$    & $\{1, p^2 \}$  \\ \hline 
\end{tabular}
\end{table}

With $X = 814$, on the author's GitHub page one can find \free{$5$} $1$-partitions of $n$ for all $n$ with $126 \le n \le 530$, \full{$P_3$} $1$-partitions of $n$ for all $n$ with $531 \le n < X$, and \full{$P_3$} $\alpha$-partitions of $n$ for all $n$ with $X \le n \le \max_{i, \alpha} \big(\sum A_i + m (X-1)\big) = 2X + 10 = 1638$ and all $\alpha \in S_3 \setminus \{2 \}$. By Theorem \ref{meta}, this finishes the proof of Proposition \ref{pthree}.
\end{proof}

Proposition \ref{pthree} takes care of all $M$ which have $5$ as their largest prime factor, via the following lemma and subsequent corollary.

\begin{fivesmooth} \label{fivesmooth}
If the largest prime divisor of $M$ is equal to $5$, then either $M \in \{5, 10, 15, 20, 30 \}$, or $M$ is divisible by $25, 40, 45$ or $60$.
\end{fivesmooth}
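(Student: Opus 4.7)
The plan is to mimic the structure of Lemma \ref{sevensmooth} essentially verbatim, only replacing the role of the prime $7$ by $5$. I would start by writing $M = 5k$, where $k$ is the largest divisor of $M$ whose prime factors are all at most $5$. If $5 \mid k$, then $M$ is divisible by $25$ and we are done, so from here on I would assume that the largest prime factor of $k$ is at most $3$, meaning $k = 2^a 3^b$ for some non-negative integers $a$ and $b$.

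The heart of the proof is then a case distinction on whether $k$ is divisible by $2$, by $3$, by both, or by neither, mirroring the three-case split in Lemma \ref{sevensmooth} (but with one extra sub-case because both $2$ and $3$ are strictly smaller than $5$, whereas in the $7$-case only the primes $2$ and $3$ were available below $7$). First, if neither $2$ nor $3$ divides $k$, then $k = 1$ and $M = 5$. If $k$ is a pure power of $2$, then $k \in \{2, 4, 8, 16, \ldots\}$, so either $M \in \{10, 20\}$ or $8 \mid k$ and hence $40 \mid M$. If $k$ is a pure power of $3$, then $k \in \{3, 9, 27, \ldots\}$, so either $M = 15$ or $9 \mid k$ and hence $45 \mid M$.

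The last case is where $k$ is divisible by both $2$ and $3$, i.e.\ $a \ge 1$ and $b \ge 1$; this is the sub-case that needs a bit of care, since it produces more possibilities than the corresponding step of Lemma \ref{sevensmooth}. Here $k$ is divisible by $6$, so $M$ is divisible by $30$. If $(a,b) = (1,1)$, then $M = 30$ exactly. If $a \ge 2$ (and $b \ge 1$), then $12 \mid k$, so $60 \mid M$. If $b \ge 2$ (and $a \ge 1$), then $18 \mid k$, so $90 \mid M$, and therefore $45 \mid M$. In every sub-case we land on one of the listed conclusions, which completes the proof.

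I do not expect any real obstacle: the argument is just a clean case analysis on the $2$-adic and $3$-adic valuations of $M/5$. The only mild subtlety is making sure the mixed case $a,b \ge 1$ is fully covered by $\{M = 30\} \cup \{60 \mid M\} \cup \{45 \mid M\}$, which is why I would explicitly split off the possibilities $a \ge 2$ and $b \ge 2$ rather than lumping them together.
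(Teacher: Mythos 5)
Your proposal is correct and is exactly what the paper intends: the paper's proof of Lemma \ref{fivesmooth} consists of the single remark that it can be done ``in analogy with the proof of Lemma \ref{sevensmooth}'', and your case analysis on the $2$-adic and $3$-adic valuations of $M/5$ carries out that analogy completely, including the extra mixed case $a,b\ge 1$ that does not arise for the prime $7$.
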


\begin{proof}
One can prove this in analogy with the proof of Lemma \ref{sevensmooth}. %If $\frac{M}{5}$ is divisible by $5$, then $M$ is divisible by $25$. On the other hand, if $\frac{M}{5}$ is not divisible by $5$, then the largest prime divisor of $\frac{M}{5}$ is at most $3$. Now there are three different cases to consider: either $\frac{M}{5}$ is not divisible by $2$, or $\frac{M}{5}$ is not divisible by $3$, or $\frac{M}{5}$ is divisible by both $2$ and $3$. In the first case, either $M \le 15$ or $45$ divides $M$. In the second case, either $M \le 20$ or $40$ divides $M$. And in the third case, either $M = 30$ , or $M$ is divisible by $60$ or $90$.
\end{proof}

\begin{fivesmoothnmvalues} \label{fivesmoothnm}
If $M = 5$, then $\nm = 126$. If $M \in \{10, 15, 30\}$, then $\nm = 108$, while $\nm = 106$ if $M = 20$. And if the largest prime divisor of $M$ is equal to $5$ with $M \notin \{5, 10, 15, 20, 30\}$, then $\nm = 78$.
\end{fivesmoothnmvalues}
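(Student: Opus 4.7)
The plan is to combine Proposition~\ref{pthree}, Lemma~\ref{fivesmooth}, and the full enumeration of $1$-partitions of $n$ for $n \le 200$ on the author's GitHub page. Proposition~\ref{pthree} already guarantees a \free{$5$} $1$-partition (hence a \mfree $1$-partition for any $M$ divisible by $5$) of every $n \ge 126$. So for every $M$ appearing in the corollary we have $\nm \le 126$ for free, and the entire task reduces to pinning down the precise threshold in the finite window $78 \le n \le 125$.

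For the generic case, those $M$ whose largest prime divisor is $5$ with $M \notin \{5,10,15,20,30\}$, Lemma~\ref{fivesmooth} tells us that $M$ is divisible by one of $25$, $40$, $45$, or $60$. Since these four divisors are comparatively large, I expect that for every $n$ with $78 \le n \le 125$ at least one of the listed $1$-partitions of $n$ uses no multiple of $25$, $40$, $45$, or $60$; a scan of the enumerated table confirms this and therefore gives $\nm = 78$ for all such $M$.

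For each exceptional $M \in \{5, 10, 15, 20, 30\}$ I would then proceed in two steps. First, for every $n$ in the claimed range $\nm \le n \le 125$ (with $\nm$ equal to $126, 108, 108, 106, 108$ respectively), exhibit a \mfree $1$-partition of $n$ drawn from the enumeration. Second, establish the matching lower bound by checking that every $1$-partition of $\nm - 1$ in the list contains at least one multiple of $M$; this pinpoints the obstructive values $n = 125$ for $M = 5$, $n = 107$ for $M \in \{10, 15, 30\}$, and $n = 105$ for $M = 20$.

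There is no real conceptual difficulty here: the corollary is bookkeeping on top of Proposition~\ref{pthree} and Lemma~\ref{fivesmooth}. The only practical hurdle is the sheer size of the finite verification, which is precisely what the pre-computed GitHub enumeration is designed to handle.
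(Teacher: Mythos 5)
Your proposal is correct and matches the paper's intent exactly: the paper itself omits the proof, stating only that it is done analogously to Corollary \ref{sevensmoothnm}, which is precisely your strategy of combining Proposition \ref{pthree} (for $n \ge 126$), Lemma \ref{fivesmooth} (to reduce the generic case to divisibility by $25$, $40$, $45$ or $60$), and the GitHub enumeration for the finite window together with the lower-bound obstructions at $n = 125$, $107$ and $105$.
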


\begin{proof}
We leave this one for the interested reader, and it can of course be done analogously to the proof of Corollary \ref{sevensmoothnm}.
\end{proof}

In a follow-up paper we will need a result on \free{$5$} partitions for some other rationals as well, so let us quickly state and prove it here.

\begin{arbsmall} \label{arbsmall}
For every integer $k \ge 2$ there exists an $N(k) \le 106 \cdot 4^k - 98 \cdot 3^k$ such that for every $\alpha \in \{\frac{2}{3^{k-1}}, \frac{4}{3^k} \}$ and every $n \ge N(k)$ a \free{$5$} $\alpha$-partition of $n$ exists.
\end{arbsmall}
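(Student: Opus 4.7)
The plan is to induct on $k$. The bound $106 \cdot 4^k - 98 \cdot 3^k$ is the closed-form solution of the recurrence $N(k+1) \le 4 N(k) + 98 \cdot 3^k$ with initial value $N(2) = 814$. For the base case $k = 2$, both $\frac{2}{3} = \frac{6}{9}$ and $\frac{4}{9}$ lie in $S_3$, so by Proposition~\ref{pthree} there exist $\{2,3\}$-full (hence \free{$5$}) $\alpha$-partitions of $n$ for all $n \ge 814 = 106 \cdot 4^2 - 98 \cdot 3^2$, giving $N(2) \le 814$.

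For the inductive step from $k$ to $k+1$, I would apply Theorem~\ref{meta} in the spirit of a one-level reduction with modulus $m = 4$: for each target $\alpha \in \{\frac{2}{3^k}, \frac{4}{3^{k+1}}\}$ and each residue $r \in \{0,1,2,3\}$ modulo $4$, pick $\beta_r \in \{\frac{2}{3^{k-1}}, \frac{4}{3^k}\}$ (covered by the inductive hypothesis) and a \free{$5$} set $A_r$ of distinct positive integers with $\sum A_r \equiv r \pmod{4}$ and $\sum A_r^{-1} = \alpha - \beta_r/4$. A convenient template is $A_r = \{c \cdot 3^j : c \in C_r\}$ for some $j \in \{k, k+1\}$ and $C_r$ a short \free{$5$} Egyptian-fraction expansion of $1$ or $\frac{1}{3}$---for instance drawn from $\{2,3,6\}$, $\{2,4,6,12\}$, $\{2,3,8,24\}$, $\{2,4,8,12,24\}$, $\{4,12\}$, $\{6,8,24\}$, $\{6,9,18\}$, $\{6,7,42\}$---chosen so that the scaled sum lands in the correct residue class. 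The \free{$5$}ness of $A_r$ is automatic since $C_r$ is \free{$5$} and $5 \nmid 3^j$. The base cases required by Theorem~\ref{meta} (for $n$ in a bounded window above $N(k+1)$) are verified by scaling an inductive $\beta$-partition by $3$ whenever $3 \mid n$, which is legitimate because $3 N(k) < N(k+1)$; the residues $n \not\equiv 0 \pmod{3}$ in this window are handled by further explicit constructions of the same flavour as the $A_r$.

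The main obstacle is arranging the four sets $A_r$ so that all residue classes modulo $4$ are realised, the reciprocal sums exactly match the targets $\alpha - \beta_r/4 \in \{\frac{1}{3^k}, \frac{1}{3^{k+1}}\}$ (forcing $C_r$ to be a $1$- or $\frac{1}{3}$-partition), and $\max_{r, \alpha} \sum A_r \le 98 \cdot 3^k$ so the recurrence closes with the stated constants. The tightest case is $\alpha = \frac{4}{3^{k+1}}$ with the residue that forces $C_r$ to be a larger $\frac{1}{3}$-partition such as $\{6,7,42\}$; pinning down the constants $106$ and $98$ (rather than something slightly larger) requires an optimisation over the choice of $C_r$ and the scaling power $3^j$, and a minor case-split may be needed to cover the small residues of $k$ modulo $4$ uniformly.
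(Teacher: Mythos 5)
Your overall strategy is the paper's: induct on $k$, take $N(2)\le 814$ from Proposition \ref{pthree} (both $\frac{2}{3}$ and $\frac{4}{9}$ lie in $S_3$), and close the recurrence $N(k)\le 4N(k-1)+98\cdot 3^{k-1}$ via a one-level reduction with sets of the form $c\cdot 3^j$. But two points in your write-up are genuine gaps. First, the crux of the proof is actually exhibiting the sets $A_r$, and you leave this open, yourself calling it ``the main obstacle'' requiring ``an optimisation.'' Worse, several of your candidate pools ($\{2,4,6,12\}$, $\{4,12\}$, $\{6,8,24\}$, etc.) contain multiples of $4$; after scaling by $3^j$ these stay divisible by $4$, so property $4$ of the metatheorem ($A_r\cap 4B=\emptyset$ for \emph{every} admissible $B$) can fail. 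The paper avoids this by using only elements $3^b$ and $2\cdot 3^b$; and your insistence that the residual reciprocal sum always be $\frac{1}{3^k}$ or $\frac{1}{3^{k+1}}$ (i.e.\ always taking $\beta=\frac{4}{3^k}$) is too rigid --- the paper hits one of the residue classes for $\alpha=\frac{4}{3^k}$ only by switching to $\beta=\frac{2}{3^{k-1}}$ (target $\frac{5}{2\cdot 3^k}$), with the set $\{2\cdot 3^{k-1},2\cdot 3^k,3^{k+1},3^{k+2},2\cdot 3^{k+2}\}$ of sum exactly $98\cdot 3^{k-1}$, which is where the constant $98$ comes from.

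Second, your paragraph about verifying ``base cases in a bounded window above $N(k+1)$'' rests on a misreading of how the induction is structured, and the patch you offer there does not work as stated. Since every $\beta_r$ belongs to the \emph{previous} level, the induction hypothesis on $k$ already supplies $\beta_r$-partitions of \emph{all} $n'\ge N(k)$; so for every $n\ge 4N(k)+\max_r\sum A_r$ one writes $n=\sum A_r+4n'$ with $n'\ge N(k)$ and is done in a single step --- there is no induction on $n$ at the new level and hence no window to check. (This is why the paper does not invoke Theorem \ref{meta} verbatim here: property $1$, $\beta_i\in S$, is deliberately not satisfied.) Your proposed window-filler --- scaling a level-$k$ partition by $3$ when $3\mid n$ and handling $n\not\equiv 0\pmod 3$ by unspecified ``further explicit constructions'' --- defers exactly the hard cases to hand-waving, and is in any case unnecessary once the induction is set up correctly.
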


\begin{proof}
For $k = 2$ this follows from the data and the proof of Proposition \ref{pthree}, so assume by induction $k \ge 3$ and $N(k-1) \le 106 \cdot 4^{k-1} - 98 \cdot 3^{k-1}$. For $\alpha = \frac{2}{3^{k-1}}$, define $l = 3$, $m_1 = m_3 = 4$, $m_2 = 2$, $\beta_1 = \beta_2 = \beta_3 = \frac{4}{3^{k-1}}$, and define the sets $A_i$ as follows:
\begin{align*}
A_1 &:= \{3^{k-1} \} \\
A_2 &:= \emptyset \\
A_3 &:= \{2 \cdot 3^{k-1}, 3^{k}, 2 \cdot 3^{k} \}
\end{align*}

We then get $\sum A_1 \equiv \sum A_3 \equiv 1 \pmod{2}$, while $\sum A_1 \not \equiv \sum A_3 \pmod{4}$, so that for all $n \in \mathbb{N}$ there exists an $i$ with $n \equiv \sum A_i \pmod{m_i}$. \\

Let $n \ge 4N(k-1) + 11 \cdot 3^{k-1}$ be a positive integer, let $i$ be such that $n \equiv \sum A_i \pmod{m_i}$, and write $n = \sum A_i + m_i n'$ with $n' \ge N(k-1)$. With $B$ a \free{$5$} $\beta_i$-partition of $n'$, one can then check that $A_i \cup m_iB$ is a \free{$5$} $\alpha$-partition of $n$. \\

As for $\alpha = \frac{4}{3^k}$, define $m = 4$, $\beta_1 = \beta_3 = \beta_4 = \frac{4}{3^{k-1}}$, $\beta_2 = \frac{2}{3^{k-1}}$, and define the sets $A_i$ as follows:
\begin{align*}
A_1 &:= \{3^k \} \\
A_2 &:= \{2 \cdot 3^{k-1}, 2 \cdot 3^{k}, 3^{k+1}, 3^{k+2}, 2 \cdot 3^{k+2} \} \\
A_3 &:= \{2 \cdot 3^k, 3^{k+1}, 2 \cdot 3^{k+1} \} \\
A_4 &:= \{2 \cdot 3^k, 3^{k+1}, 3^{k+2}, 2 \cdot 3^{k+2} \}
\end{align*}

Here we have $\sum A_1 \equiv \sum A_3 \equiv 1 \pmod{2}$ and $\sum A_2 \equiv \sum A_4 \equiv 0 \pmod{2}$, while on the other hand, $\sum A_1 \not \equiv \sum A_3 \pmod{4}$, and $\sum A_2 \not \equiv \sum A_4 \pmod{4}$. \\

The rest of the proof for $n \ge 4N(k-1) + 98 \cdot 3^{k-1}$ follows analogously to the proof for $\alpha = \frac{2}{3^{k-1}}$, and we conclude that $N(k)$ exists with $N(k) \le 4N(k-1) + 98 \cdot 3^{k-1} \le 106 \cdot 4^{k} - 98 \cdot 3^{k}$.
\end{proof}

\section[When M is divisible by 4, 6 or 9]{When $M$ is divisible by $4$, $6$ or $9$} \label{foursixornineparts}
The only $M$ which are not covered by the results in \mbox{\cite{ks2}} or Proposition \ref{pthree}, are those which can be written as $M = 2^x 3^y$ for non-negative integers $x, y$. In this section we will assume $x + y \ge 2$, in which case \mfree $1$-partitions exist for all $n \ge 183$.

\begin{foursixorninestuff} \label{foursixorninestuff}
If $4$ divides $M$, then \mfree $1$-partitions exist for all $n \ge 155$. If $6$ divides $M$, then \mfree $1$-partitions exist for all $n \ge 183$. And if $9$ divides $M$, then \mfree $1$-partitions exist for all $n \ge 91$. 
\end{foursixorninestuff}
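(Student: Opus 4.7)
My plan is to apply the metatheorem (Theorem \ref{meta}) three times with $S = \{1\}$, once with $Q$ being the property of being \free{$4$}, once with $Q$ being \free{$6$}, and once with $Q$ being \free{$9$}. The reduction to these three specific cases is immediate: if $4$ divides $M$ and $A$ is a \free{$4$} $1$-partition of $n$, then $M \mid a$ would force $4 \mid a$, contradicting that $A$ is \free{$4$}, so $A$ is automatically \mfree; the analogous implications hold when $6 \mid M$ and when $9 \mid M$. Thus proving the three bounds for the divisors $4, 6, 9$ transfers to all $M$ divisible by $4$, $6$, or $9$ respectively.

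For each case I would pick a modulus $m$ coprime to the divisor (call it $d$), so that $mB$ is automatically \free{$d$} whenever $B$ is --- natural choices are $m = 3$ for $d = 4$, $m = 5$ for $d = 6$, and $m = 2$ for $d = 9$. Since $S = \{1\}$, all $\beta_i$ equal $1$, and the third property of Theorem \ref{meta} specializes to $\sum A_i^{-1} = \frac{m-1}{m}$. The combinatorial task is then, for each residue $r \in \{0, 1, \ldots, m-1\}$, to exhibit a \free{$d$} set $A_r$ of distinct positive integers with $\sum A_r^{-1} = \frac{m-1}{m}$ and $\sum A_r \equiv r \pmod{m}$. Property 5 is immediate since $A_r$ is \free{$d$} and $\gcd(m, d) = 1$. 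Property 4 requires that whenever $a \in A_r$ is itself a multiple of $m$, say $a = mc$, the integer $c$ does not appear in any $1$-partition $B$ that we plug in; this is handled exactly as in the Graham-style example in Section \ref{metastuff}, by augmenting $Q$ with a finite explicit exclusion list of such $c$'s and then choosing $X$ large enough to accommodate the extra constraint during base-case verification.

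The actual work then splits into two finite searches. First, one exhibits covering families $\{A_r\}_{r=0}^{m-1}$ with $\sum A_r$ kept small in each of the three cases, which is a straightforward computer enumeration of decompositions of $\frac{m-1}{m}$ into distinct unit fractions subject to the \free{$d$} and residue-class constraints. Second, for the chosen $X$ one verifies computationally that \free{$d$} $1$-partitions of $n$ exist for every $n$ in the base-case range $[X, \max_r \sum A_r + m(X-1)]$; the GitHub repository tabulating all $1$-partitions of $n \le 200$ supplies this data once the $A_r$ are tuned.

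The main obstacle is not conceptual but the simultaneous optimization: choosing the $A_r$ small enough and $X$ large enough that the induction activates at precisely $n = 155$, $183$, and $91$, while keeping the required base cases inside the precomputed window. As with Lemma \ref{arbsmall}, allowing the $m_i$ to differ from one another within a single case (rather than forcing a single common $m$) may be necessary in order to keep $\max_r \sum A_r$ tight enough to realize these specific bounds, and the case $d = 6$ is likely the most delicate since the reciprocal sum $\frac{4}{5}$ must be reached without ever producing a term divisible by $6$.
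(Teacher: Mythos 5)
Your strategy is sound and sits squarely inside the framework of Theorem \ref{meta}, but it instantiates that framework quite differently from the paper, and the difference matters. The paper runs a \emph{single} covering system for all three divisors at once: it takes $S = \{\frac{5}{6}, 1\}$, $m = 5$ and $\beta_i = \frac{5}{6}$ throughout, so that the third property forces $\sum A_i^{-1} = \alpha - \frac{1}{6} \in \{\frac{5}{6}, \frac{2}{3}\}$, a rational whose denominator is coprime to $m = 5$. The ten sets $A_i$ can therefore be chosen to contain no multiples of $4$, $5$, $6$ or $9$ at all, which makes the fourth and fifth properties trivial for all three divisors simultaneously, and a single base-case verification (with $X = 211$ and range up to $5X + 396 = 1451$) finishes everything. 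Your version, with $S = \{1\}$ and $\beta_i = 1$, forces $\sum A_r^{-1} = \frac{m-1}{m}$, whose denominator is exactly $m$; consequently every $A_r$ \emph{must} contain multiples of $m$, and the exclusion-list repair of the fourth property is not optional. Be aware that this repair is self-referential: excluding $c$ from $B$ obliges you, via the fifth property, to keep $c$ out of $A_r \cup mB$ as well, which generates a further exclusion $c/m$ whenever $m \mid c$, and every excluded integer must also be absent from all the $A_j$. Naive choices collapse --- for instance, for the \free{$9$} case with $m = 2$ the set $\{4, 5, 20\}$ has reciprocal sum $\frac{1}{2}$ and odd sum, but $20 = 2 \cdot 10$ forces the exclusion of $10$, hence of $5$, which lies in the set itself. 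You must arrange that every multiple of $m$ in $A_r$ is \emph{exactly} divisible by $m$ and that the resulting quotients avoid all the $A_j$, as in Graham's $78 = 2 \cdot 39$ with $39$ odd. This is workable (it is literally Graham's original argument), but it is the step your sketch treats most lightly, and it is precisely what the paper's enlargement of $S$ is designed to avoid. The trade-off is clear: your route needs three separate covering systems, three exclusion lists and three base-case searches but no auxiliary rational, whereas the paper's needs one search but must additionally supply \mfree $\frac{5}{6}$-partitions over the base-case range.
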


\begin{proof}
Let $M$ be divisible by $4$, $6$ or $9$. In the framework of Theorem \ref{meta} with property $Q$ the property of being \mfree, we choose $S = \{\frac{5}{6}, 1\}$ with $m = 5$ and $\beta_i = \frac{5}{6}$ for all $\alpha \in S$ and all $i$. For $\alpha = 1$ the sets $A_1, \ldots, A_5$ are defined as follows: 
\begin{align*}
A_{1} &:= \{2, 11, 13, 21, 22, 26, 33, 273 \} \\	%S_{1} = 401	 
A_{2} &:= \{2, 7, 11, 21, 22, 154 \} \\ 					%S_{2} = 217        	   
A_{3} &:= \{2, 7, 13, 14, 39, 91, 182 \} \\ 			%S_{3} = 248				 
A_{4} &:= \{2, 7, 14, 21, 23, 46, 161 \} \\ 			%S_{4} = 274				 
A_{5} &:= \{2, 3 \} 															%&S_{5} = 5
\end{align*}

For $\alpha = \frac{5}{6}$ the sets $A_1, \ldots, A_5$ are defined as follows:
\begin{align*}
A_{1} &:= \{3, 7, 13, 14, 26, 273 \} \\ 					%&S_{1} = 336			 		 
A_{2} &:= \{2, 14, 22, 33, 77, 154 \} \\ 					%&S_{2} = 302				 
A_{3} &:= \{2, 11, 22, 33 \} \\ 									%&S_{3} = 68							   	 	 
A_{4} &:= \{3, 7, 13, 14, 39, 91, 182 \} \\ 			%&S_{4} = 349 			 
A_{5} &:= \{2, 11, 26, 33, 143 \} 								%&S_{5} = 215							 
\end{align*}

For all $\alpha$ and $i$ we now have $\sum A_i \equiv i \pmod{5}$ and $\sum A_i^{-1} = \alpha - \frac{1}{6}$, thereby satisfying the second and third property. Since none of the $A_i$ contain any multiples of $4$, $5$, $6$ or $9$, one sees that the fourth and fifth properties are also satisfied. \\

With $X = 211$ and $Y \in \{155, 183, 91\}$ depending on whether $M$ is divisible by $4, 6$ or $9$, on the author's GitHub page one can find \mfree $1$-partitions of $n$ for all $n$ with $Y \le n \le \max_{i, \alpha} \big(\sum A_i + m (X-1)\big) = 5X + 396 = 1451$ and \mfree $\frac{5}{6}$-partitions of $n$ for all $n$ with $X \le n \le 5X + 396$, finishing the proof.
\end{proof}

The $\nm$ values in Theorem \ref{main1} for all $M > 3$ that are not divisible by a prime larger than $3$ now follow from the following lemma, to be proven in analogy with the proof of Lemma \ref{sevensmooth}.

\begin{threesmooth} \label{threesmooth}
If the largest prime divisor of $M > 3$ is at most $3$, then either $M \in \{4, 6, 8, 9, 12, 18, 24 \}$, or $M$ is divisible by $16, 27$ or $36$.
\end{threesmooth}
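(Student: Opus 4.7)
The plan is to mimic the case analysis used in the proof of Lemma \ref{sevensmooth}. Since the largest prime divisor of $M > 3$ is at most $3$, write $M = 2^x 3^y$ with $x, y \ge 0$, and split into cases according to whether $M$ is divisible by $2$, by $3$, or by both.

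First, if $y = 0$, then $M = 2^x$ with $M > 3$, so $x \ge 2$; either $x \le 3$, giving $M \in \{4, 8\}$, or $x \ge 4$, so that $16 \mid M$. Symmetrically, if $x = 0$, then $M = 3^y$ with $y \ge 2$; either $y = 2$ and $M = 9$, or $y \ge 3$ and $27 \mid M$. This handles the case when $M$ has a unique prime divisor.

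Next I would suppose $x \ge 1$ and $y \ge 1$ and split further on whether both exponents can be forced to be at least $2$. If $x \ge 2$ and $y \ge 2$, then $36 = 4 \cdot 9$ divides $M$, so we are done. Otherwise either $x = 1$ or $y = 1$. If $x = 1$, then $M = 2 \cdot 3^y$, and either $y \le 2$, giving $M \in \{6, 18\}$, or $y \ge 3$, giving $27 \mid M$. If instead $y = 1$, then $M = 2^x \cdot 3$, and either $x \le 3$, giving $M \in \{6, 12, 24\}$, or $x \ge 4$, giving $16 \mid M$.

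These cases exhaust all possibilities for $M > 3$ with largest prime divisor at most $3$, and in each one we either end up with $M \in \{4, 6, 8, 9, 12, 18, 24\}$ or conclude $16 \mid M$, $27 \mid M$, or $36 \mid M$. There is no real obstacle here; the only thing to be careful about is covering the boundary cases ($x$ or $y$ equal to $1$ versus at least $2$) so that no small $M$ slips through without being either listed explicitly or having one of the three large divisors forced.
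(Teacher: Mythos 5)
Your case analysis is correct and complete, and it is exactly what the paper intends: the paper gives no explicit proof, only the remark that the lemma is "to be proven in analogy with the proof of Lemma \ref{sevensmooth}," which is precisely the divisibility case split you carry out. All boundary cases are covered and every $M = 2^x 3^y > 3$ indeed lands either in $\{4, 6, 8, 9, 12, 18, 24\}$ or is divisible by $16$, $27$ or $36$.
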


%Proof: Assume that $M$ is larger than $24$. If either $M$ or $\frac{M}{3} > 8$ is not divisible by $3$, then $M$ is divisible by $16$. Otherwise $M$ is divisible by $9$, which means it is either by $27$, or $M$ is equal to $9$ times a power of two, implying that $M$ is divisible by $36$.

This settles the $\nm$ values for all $M > 3$.

\section[When M equals 3]{When $M$ equals $3$} \label{threeparts}
Even though the proof of the induction step in Proposition \ref{pthree} does still work when $p$ is larger than $3$, the base case of Theorem \ref{meta} cannot be true, due to Lemma \ref{modtwothree}; if $M$ equals $2$ or $3$, then not all $n$ with $X \le n \le \max_{i, \alpha} \big(\sum A_i + m_i (X-1)\big)$ can have an \mfree $\alpha$-partition. With $M'$ equal to $8$ or $3$ (depending on whether $M = 2$ or $M = 3$ respectively), at best only those $n$ with $n \equiv \alpha \pmod{M'}$ can have an \mfree $\alpha$-partition. Fortunately, we can modify Theorem \ref{meta} as follows.

\begin{meta2}[Modification of Theorem \ref{meta}] \label{meta2}
Assume that for all $\alpha \in S$, $m_i, \beta_i, A_i$ exist which satisfy the five properties mentioned at the start of Section \ref{metastuff}, and such that $\gcd(m_i, M') = \gcd(a, M') = 1$, for all $\alpha$, $i$ and $a \in A_i$. Further assume that a positive integer $X$ exists such that for all $\alpha \in S$ and all $n \equiv \alpha \pmod{M'}$ with $X \le n \le \max_{i, \alpha} \big(\sum A_i + m_i (X-1)\big)$ an \mfree $\alpha$-partition of $n$ exists with property $Q$. Then \mfree $\alpha$-partitions of $n \equiv \alpha \pmod{M'}$ with property $Q$ exist for all $\alpha \in S$ and all $n \ge X$.
\end{meta2}

\begin{proof}
Recall that, for a given $\alpha \in S$ and $i$ such that $n \equiv \sum A_i \pmod{m_i}$, in the proof of Theorem \ref{meta} we write $n = \sum A_i + m_i n'$. We then apply the induction hypothesis to $n'$ and assume a $\beta_i$-partition of $n'$. But with the amended base case, this assumption would only be valid if $n' \equiv \beta_i \pmod{M'}$. Fortunately, this turns out to be true due to the second property, if $n \equiv \alpha \pmod{M'}$.
\begin{align*}
n' &= m_i^{-1}\left(n - \sum A_i\right) \\
&\equiv m_i \left(\alpha - \sum A_i^{-1} \right) \pmod{M'} \\
&= \beta_i
\end{align*}

The rest of the proof of Theorem \ref{meta} still holds verbatim.
\end{proof}

We conclude that the framework of Theorem \ref{meta} does still suit our needs, even for \mfree partitions with $M \in \{2, 3 \}$, if we use the amended base case and as long as all $m_i$ (or all $m$ if the $m_i$ are the same for all $i$) and $a \in A_i$ are coprime to $M'$ for all $\alpha$ and $i$. Now, the proof of Proposition \ref{pthree} uses sets $A_i$ that only contain powers of $p$, while $m \in \{1, 2\}$ for all $\alpha \in S_p$. So we indeed get $\gcd(m, M') = \gcd(a, M') = 1$, and it is therefore sufficient to verify the base case.

\begin{twofive} \label{twofive}
Assume $n \equiv 1 \pmod{3}$. Then \free{$3$} $1$-partitions exist if $n \ge 154$, while \full{$\{2, 5\}$} $1$-partitions exist if $n \ge 3634$. 
\end{twofive}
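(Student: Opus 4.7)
My plan is to invoke the modified metatheorem, Lemma~\ref{meta2}, with $M = 3$ (so $M' = 3$) and to reuse the framework from the proof of Proposition~\ref{pthree} essentially verbatim, only replacing $p = 3$ by $p = 5$. Concretely, take $S = S_5 = \{\tfrac{4}{25}, \tfrac{6}{25}, \ldots, \tfrac{40}{25}, 1, 2\}$, and for every $\alpha = s/p^2 \in S_5\setminus\{2\}$ read off $m = 2$, $\beta_1, \beta_2$ and $A_1, A_2$ from the same three cases of the table in Proposition~\ref{pthree} (the case $\alpha = 1$ corresponds to $s = p^2 = 25$ and is covered by the middle row); for $\alpha = 2$, take $m = 1$, $\beta_1 = 1$ and $A_1 = \{1\}$. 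As the author already notes at the start of Section~\ref{threeparts}, the induction step of Proposition~\ref{pthree} goes through unchanged for $p = 5$, so properties 1--3 of Theorem~\ref{meta} hold. Since every $a \in A_i$ is a power of $5$ and $m \in \{1, 2\}$, both $a$ and $m$ are coprime to $3$, verifying the coprimality hypothesis of Lemma~\ref{meta2}.

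The two halves of the proposition correspond to two choices of the property $Q$. For the \full{$\{2,5\}$} statement take $Q$ to be \full{$\{2,5\}$}; properties~4 and~5 are automatic because the $A_i$ contain only powers of $5$. Lemma~\ref{meta2} then reduces everything to the base case: produce, for a single integer $X$ around $3634$, a \full{$\{2,5\}$} $\alpha$-partition of $n$ for every $\alpha \in S_5$ and every $n \equiv \alpha \pmod{3}$ with $X \le n \le \max_{i,\alpha}\!\big(\sum A_i + m(X-1)\big)$. For the \free{$3$} statement one can either rerun the same framework with $Q$ the weaker property of being \free{$3$}, in which case a much smaller $X$ (conjecturally $X = 154$) ought to suffice because \free{$3$} partitions are far more plentiful, or else simply note that the \full{$\{2,5\}$} case already yields $n \ge 3634$ and computer-check the remaining residues $n \equiv 1 \pmod 3$ in the interval $[154, 3633]$.

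The main obstacle is clearly the \full{$\{2,5\}$} base case. The only admissible denominators are the $\{2,5\}$-smooth numbers $2^a 5^b$, whose reciprocals form a very sparse subset of the rationals, so $\alpha$-partitions with $\alpha$ as small as $4/25$ must use many fairly large such numbers, and the simultaneous constraints on $\sum A$ and $\sum A^{-1}$ become hard to satisfy. This sparsity is what pushes $X$ up to about $3634$, well above the $X = 814$ used for $p = 3$ in Proposition~\ref{pthree}, and it is the reason the bulk of the work is computational, presumably recorded on the author's GitHub page. Once the certificates for both base cases are in place, the proposition follows immediately from Lemma~\ref{meta2}.
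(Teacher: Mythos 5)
Your proposal matches the paper's proof essentially exactly: the paper invokes Lemma~\ref{meta2} with $M' = 3$, reuses $S_5$, $P_5$ and the decomposition from Proposition~\ref{pthree} (noting the $A_i$ are powers of $5$ and $m \in \{1,2\}$, so the coprimality hypothesis holds), and discharges the base case computationally, taking your second suggested route for the \free{$3$} half (direct verification on $[154, 3634)$). The only divergence is a detail: the paper needs $X = 6482$ rather than $X \approx 3634$ for the full set $S_5$ (verifying \full{$\{2,5\}$} $1$-partitions separately on $[3634, 6482)$), and $\alpha = 1$ falls under the first row of the table ($p^2 - p < s \le p^2$), not the middle one.
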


\begin{proof}
With $S_p$ and $P_p$ as in the proof of Proposition \ref{pthree} and $X = 6482$, on the author's GitHub page one can find \free{$3$} $1$-partitions of $n$ for all $n \equiv 1 \pmod{3}$ with $154 \le n < 3634$, \full{$P_5$} $1$-partitions of $n$ for all $n \equiv 1 \pmod{3}$ with $3634 \le n < X$, and \full{$P_5$} $\alpha$-partitions of $n$ for all $n \equiv \alpha \pmod{3}$ with $X \le n \le \max_{i, \alpha} \big(\sum A_i + m (X-1)\big) = 2X + 28 = 12992$ and all $\alpha \in S_5 \setminus \{2 \}$. 
\end{proof}

On the other hand, all $34$ $1$-partitions of $151$ contain an integer divisible by $3$, and we get that $\nm$ is exactly equal to $154$ for $M = 3$.

\section[When M equals 2]{When $M$ equals $2$} \label{oddparts}
In analogy with Proposition \ref{twofive}, we can prove that \free{$2$} $1$-partitions do exist for all large enough integers $n$ with $n \equiv 1 \pmod{8}$.

\begin{oddstuff} \label{oddstuff}
Assume $n \equiv 1 \pmod{8}$. Then \free{$2$} $1$-partitions exist if $n \ge 737$, while \full{$\{3, 5, 7\}$} $1$-partitions exist if $n \ge 3609$. 
\end{oddstuff}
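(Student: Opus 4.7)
The plan is to directly adapt the proofs of Propositions \ref{pthree} and \ref{twofive}, taking $p = 7$ and starting from $S = S_7$ (possibly slightly enlarged), with the set of allowed primes being $P = \{3, 5, 7\}$ rather than the $P_p = \{2, p\}$ of Proposition \ref{pthree}, since we must now exclude the prime $2$. Let property $Q$ be the property of being \full{$P$} and apply Lemma \ref{meta2} with $M' = 8$. Once \full{$P$} $1$-partitions have been produced for all sufficiently large $n \equiv 1 \pmod{8}$, the \free{2} statement follows automatically (as $2 \notin P$), leaving only the interval $737 \le n < 3609$ to be verified directly.

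The central new complication, absent in the $M = 3$ case, is that Lemma \ref{meta2} requires $\gcd(m_i, M') = 1$, and the natural choice $m = 2$ used in Propositions \ref{pthree} and \ref{twofive} is incompatible with $M' = 8$. For each $\alpha \in S$ we must therefore pick $m$ odd and $P$-smooth (so $m \in \{3, 7, 9, 21, \ldots\}$) and construct sets $A_i$ of odd $P$-smooth integers together with $\beta_i \in S$ satisfying the five properties of Theorem \ref{meta}; the $Q$-related properties are automatic because $m$ and every $a \in A_i$ are $P$-smooth. Designing the $A_i$ so that $\sum A_i \pmod m$ covers all residues while $\beta_i = m(\alpha - \sum A_i^{-1})$ lands inside $S$ is a case analysis performed once, uniformly over $S$.

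Given this reduction data, the proof reduces to a finite computer check: pick a threshold $X$ large enough that the induction step of Lemma \ref{meta2} closes, and verify that for every $\alpha \in S$ and every $n \equiv \alpha \pmod{8}$ with $X \le n \le \max_{i,\alpha}\bigl(\sum A_i + m(X-1)\bigr)$, a \full{$P$} $\alpha$-partition of $n$ exists. A separate search then handles \free{2} $1$-partitions for $n \equiv 1 \pmod{8}$ with $737 \le n < 3609$, where the weaker constraint (merely that no $a_i$ is even) leaves far more freedom; both partition databases would be posted on the author's GitHub page, as for the previous propositions.

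The main obstacle is the first step: with $m$ odd, each $A_i$ must use a cleverer mix of odd $P$-smooth integers (not merely powers of $p$) to simultaneously cover all residues mod $m$ and keep the denominator of $m \sum A_i^{-1}$ compatible with $S$. This is strictly harder than the clean $m = 2$ case of Propositions \ref{pthree} and \ref{twofive}, and it may well be necessary to enlarge $S$ beyond $S_7$ to provide enough $\beta_i$ targets for all residue classes. Once the design is in place, the remaining work is purely computational, though on substantially larger ranges than in Proposition \ref{twofive}, reflecting the larger base-case threshold.
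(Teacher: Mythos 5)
Your high-level strategy is the right one and matches the paper's: apply Lemma \ref{meta2} with $M' = 8$, which forces the modulus $m$ and all elements of the $A_i$ to be odd, take property $Q$ to be $\{3,5,7\}$-fullness, and finish with a computational base case plus a separate direct search for \free{$2$} $1$-partitions on $737 \le n < 3609$. But the proposal stops exactly at the step that constitutes the actual mathematical content, and the route you sketch for that step is likely to fail or at least balloon badly. You propose to carry over $S = S_7$ from Proposition \ref{pthree} and replace $m = 2$ by an odd $m$. The trouble is that Proposition \ref{pthree} relies on $\beta_i = m\bigl(\alpha - \sum A_i^{-1}\bigr)$ with $m = 2$, so that $\beta_i$ is roughly a doubling of $\alpha$ and stays inside the bounded window $\{\frac{4}{p^2}, \ldots, \frac{2p^2-2p}{p^2}, 1, 2\}$. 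With $m \ge 3$ odd, $\beta_i$ is at least a tripling, and keeping it inside $S_7$ (or any bounded enlargement) for every residue class forces the $A_i$ to absorb almost all of $\alpha$ exactly, for every $\alpha \in S$ and every residue mod $m$ simultaneously --- you flag this as ``the main obstacle'' but give no reason to believe it can be overcome for the whole of $S_7$.

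The paper sidesteps this entirely with a much simpler instantiation: $S = \{1\}$, $m = 15$, and $\beta_i = 1$ for all $i$, so the third property reduces to the single concrete requirement $\sum A_i^{-1} = \frac{14}{15}$. The design problem then becomes finite and explicit: exhibit, for each residue $r$ modulo $15$, one set $A_r$ of odd $\{3,5,7\}$-smooth integers with reciprocal sum $\frac{14}{15}$ and $\sum A_r \equiv r \pmod{15}$ (and with no multiple of $15$ exceeding $15$, so that disjointness from $15B$ is automatic). The paper lists these fifteen sets explicitly and takes $X = 3609$, with base-case verification up to $15X + 12963$. So the gap in your proposal is the missing realization that one need not drag the whole family $S_p$ through the argument: choosing $\beta_i = \alpha = 1$ collapses the case analysis to a single, solvable Egyptian-fraction design problem modulo $15$.
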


\begin{proof}
With $\alpha = 1$, we choose $S = \{\alpha \}$, $m = 15$ (which is coprime to $8$), $\beta_i = \alpha$ for all $i$, and we define the sets $A_1, \ldots, A_{15}$ as follows:
\begin{align*}
A_{1} &:= \{3, 5, 7, 9, 21, 27, 35, 81, 147, 189, 245, 441, 567, 3969 \} \\ %&s_{106} = 5746
A_{2} &:= \{3, 5, 7, 9, 15, 25, 63, 81, 189, 441, 567, 1225, 1323, 3969 \} \\ %&s_{2} = 7922
A_{3} &:= \{3, 5, 7, 9, 15, 25, 49, 125, 147, 441, 1225, 1715, 3087, 6125 \} \\ %&s_{18} = 12978 
A_{4} &:= \{3, 5, 7, 9, 21, 27, 35, 63, 147, 189, 245, 1323 \} \\ %&s_{34} = 2074
A_{5} &:= \{3, 5, 7, 9, 21, 25, 27, 125, 189, 245, 441, 1225, 1323, 6125 \} \\ %&s_{50} = 9770
A_{6} &:= \{3, 5, 7, 9, 21, 25, 35, 63, 147, 245, 441, 1225 \} \\ %&s_{66} = 2226
A_{7} &:= \{3, 5, 7, 9, 15, 27, 49, 81, 189, 441, 567, 3969 \} \\ %&s_{82} = 5362
A_{8} &:= \{3, 5, 7, 9, 21, 25, 35, 63, 175, 245, 441, 1029, 1225, 8575 \} \\ %&s_{98} = 11858
A_{9} &:= \{3, 5, 7, 9, 15, 25, 35, 147, 441, 1225, 1715, 3087 \} \\ %&s_{114} = 6714
A_{10} &:= \{3, 5, 7, 9, 15, 27, 49, 63, 189, 1323 \} \\ %&s_{10} = 1690
A_{11} &:= \{3, 5, 7, 9, 15, 27, 49, 63, 343, 441, 1323, 9261 \} \\ %&s_{26} = 11546
A_{12} &:= \{3, 5, 7, 9, 15, 25, 49, 63, 441, 1225 \} \\ %&s_{42} = 1842 
A_{13} &:= \{3, 5, 7, 9, 15, 21, 35, 441, 1715, 3087 \} \\ %&s_{58} = 5338
A_{14} &:= \{3, 5, 7, 9, 21, 27, 35, 49, 189, 245, 441, 1323 \} \\ %&s_{74} = 2354
A_{15} &:= \{3, 5, 7, 9, 25, 27, 35, 63, 81, 189, 245, 567, 1225, 3969 \} %&s_{90} = 6450
\end{align*}

One can now check $\sum A_i \equiv i \pmod{15}$ and $\sum A_i^{-1} = \frac{14}{15}$ for all $i$, taking care of the second and third property. With property $Q$ defined as the property of being \full{$\{3, 5, 7\}$} and not containing $1$, the fourth and fifth property are also clear, since all sets $A_i$ consist of integers only divisible by $3, 5$ or $7$, and do not contain any multiples of $15$ larger than $15$ itself. \\

With $X = 3609$, on the author's GitHub page one can find \free{$2$} $1$-partitions of $n$ for all $n \equiv 1 \pmod{8}$ with $737 \le n < X$, and $1$-partitions of $n$ with property $Q$ for all $n \equiv 1 \pmod{8}$ with $X \le n \le \max_{i, \alpha} \big(\sum A_i + m (X-1)\big) = 15X + 12963 = 67098$, which finishes the proof.
\end{proof}

For future reference we note that all \free{$2$} $1$-partitions $A$ of $n \ge 737$ we create have $1 \notin A$ and $\{3, 5\} \subset A$. Considering the set $C = A \setminus \{3, 5\}$ then provides the following corollary.

\begin{sevenoverfifteen} \label{sevenfifteen}
If $n \ge 729$ and $n \equiv 1 \pmod{8}$, then a \free{$2$} $\frac{7}{15}$-partition $C$ of $n$ exists, with $C \cap \{1, 3, 5\} = \emptyset$.
\end{sevenoverfifteen}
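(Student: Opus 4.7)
The plan is to deduce this directly from Proposition \ref{oddstuff} applied to the shifted argument $m = n+8$, and then excise $\{3,5\}$ from the resulting partition. Since $n \ge 729$ and $n \equiv 1 \pmod{8}$, we have $m \ge 737$ and $m \equiv 1 \pmod{8}$, so Proposition \ref{oddstuff} furnishes a \free{2} $1$-partition $A$ of $m$. The paragraph immediately preceding the corollary asserts that the partitions produced by that proof always satisfy $1 \notin A$ and $\{3,5\} \subset A$; granting this, I would set $C := A \setminus \{3,5\}$. Then $\sum C = m - 8 = n$, while
\begin{equation*}
\sum C^{-1} \;=\; 1 - \tfrac{1}{3} - \tfrac{1}{5} \;=\; \tfrac{7}{15},
\end{equation*}
so $C$ is a $\tfrac{7}{15}$-partition of $n$. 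It is \free{2} because $A$ is, and $C \cap \{1,3,5\} = \emptyset$ because $1 \notin A$ while $3$ and $5$ were removed explicitly.

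The only substantive step is therefore justifying the parenthetical claim that every $A$ constructed in the proof of Proposition \ref{oddstuff} contains $3$ and $5$ and avoids $1$. This splits into the inductive step and the base range $737 \le n < X = 3609$. For the inductive step, recall that new partitions are built as $A_i \cup 15B$; inspection of the listed sets $A_1, \ldots, A_{15}$ shows that each contains $\{3,5\}$ and none contains $1$, and the companion set $15B$ consists of multiples of $15$, all of which exceed $5$, so it can neither reintroduce $1$ nor disturb the presence of $3$ and $5$. For the base range, one relies on the fact that the explicit partitions tabulated on the author's GitHub page were chosen with $\{3,5\} \subset A$ and $1 \notin A$, so the claim reduces to a finite verification already done there.

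I expect the main (and essentially only) obstacle to be this bookkeeping dependency on the construction, rather than any new mathematical content; the corollary is really just a rewriting of Proposition \ref{oddstuff}. No independent case analysis or inductive argument is needed, since both the bound $n \ge 729$ and the congruence $n \equiv 1 \pmod{8}$ translate directly into the hypotheses of Proposition \ref{oddstuff} after the shift $n \mapsto n + 8$.
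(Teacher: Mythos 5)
Your proposal is correct and is essentially identical to the paper's own (implicit) argument: the paper likewise shifts $n \mapsto n+8$, takes the \free{$2$} $1$-partition $A$ of $n+8 \ge 737$ noted to satisfy $1 \notin A$ and $\{3,5\} \subset A$, and sets $C = A \setminus \{3,5\}$. Your additional check that the inductive construction $A_i \cup 15B$ preserves these containments is a sensible (and correct) elaboration of the bookkeeping the paper only asserts.
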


\section[Finding rationals with bound at most 100]{The set of all $\alpha$ with $\nalpha \le 100$} \label{nalpha}
Define $\setB{n}$ to be the set of all rationals $\alpha$ for which an $\alpha$-partition of $n$ exists, and let us define $\setB{n,N} := \displaystyle \bigcap_{i = n}^{N} \setB{i}$. Since $\setA{n}$ is equal to the infinite intersection, we get in particular $\setA{n} \subseteq \setB{n,N}$ for all $N \ge n$. The following pseudo-code shows how to create these sets $\setB{n}$ and $\setB{n,N}$, where in practice the function \verb|forpart| from the computer program PARI/GP can be applied to loop over all partitions of a positive integer. \\

\textbf{Algorithm: Create\_\setB{n}}

\begin{tabbing}
  \hspace{2em} \= \hspace{2em} \= \hspace{2em} \= \kill
  1. Initialize \texttt{B(n) = $\{ \}$}. \\
  2. For \texttt{partition p of n}: \\
  \> If \texttt{p only contains distinct integers}: \\
  \> \> a. \texttt{$\alpha$ = sum of reciprocals of p}. \\
	\> \> b. \texttt{B(n) = B(n) $\cup$} \texttt{$\{ \alpha \}$}. \\
  3. Return \texttt{B(n)}.
\end{tabbing}

\textbf{Algorithm: Create\_\setB{n,N}}

\begin{tabbing}
  \hspace{2em} \= \hspace{2em} \= \hspace{2em} \= \hspace{2em} \= \kill
  1. Initialize \texttt{B(n,n) = Create\_B(n)}. \\
	2. For \texttt{i going from n+1 to N}: \\
	\> Initialize \texttt{B(n,i) = $\{ \}$}. \\
  \> For \texttt{partition p of i}: \\
  \> \> If \texttt{p only contains distinct integers}: \\
	\> \> \> a. \texttt{$\alpha$ = sum of reciprocals of p}. \\ 
	\> \> \> b. If \texttt{$\alpha \in$ \hspace{-7pt} B(n,i-1)}: \\
	\> \> \> \> \texttt{B(n,i) = B(n,i) $\cup$} \texttt{$\{ \alpha \}$}.  \\
  3. Return \texttt{B(n,N)}.
\end{tabbing}

Graham proved that for every positive rational $\alpha$ there exists an $\nalpha$ such that $\alpha \in \setA{\nalpha}$. On the other hand, a uniform lower bound exists.

\begin{atleast66}
For all positive rationals $\alpha$ we have $\nalpha \ge 66$.
\end{atleast66}

\begin{proof}
Writing and running the code for \emph{Create\_\setB{n,N}} with $n = 65$ and $N = 78$ gives $\setB{n,N} = \emptyset$. In particular, $\setA{65} \subseteq \setB{65, 78} = \emptyset$. So $\alpha \notin \setA{m} \subseteq \setA{65} $ for all $\alpha$ and all $m \le 65$, so that $\nalpha \ge 66$.
\end{proof}

\begin{allofem}
There exist exactly $4314$ positive rationals $\alpha$ with $\nalpha \le 100$.
\end{allofem}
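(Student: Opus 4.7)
My plan is a two-sided argument: produce an explicit candidate set $S$ of size $4314$, show $A(100) \subseteq S$ (upper bound), and show $S \subseteq A(100)$ (lower bound), the latter via the metatheorem of Section \ref{metastuff}.

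For the upper bound, I would run \emph{Create\_B(n,N)} from the pseudocode with $n = 100$ and $N$ chosen large enough that the cardinality stabilizes. Since $A(100) = \bigcap_{i \ge 100} B(i) \subseteq B(100, N)$ for every $N \ge 100$, we get $|A(100)| \le |B(100, N)|$. I would increase $N$ (monitoring $|B(100, N)|$) until it equals $4314$ and remains constant over a generous additional window; this gives a set $S := B(100, N)$ of candidates and the upper bound $|A(100)| \le 4314$.

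For the lower bound, I would apply Theorem \ref{meta} with $X = 100$, trivial property $Q$, and the set $S$ above. For each $\alpha \in S$, a computer search looks for $l$, $m_i$, $\beta_i \in S$, and finite sets $A_i$ of positive integers satisfying the five listed properties; properties (4) and (5) are automatic as long as each $A_i$ avoids multiples of $m_i$, while (2) and (3) are linear congruence/equation constraints easily encoded for a backtracking search. Having found such data for all $4314$ rationals, the remaining step is a finite verification: for every $\alpha \in S$ and every $n$ with $100 \le n \le \max_{i,\alpha}\bigl(\sum A_i + m_i \cdot 99\bigr)$, exhibit an $\alpha$-partition of $n$ (by exhaustive enumeration via \texttt{forpart}). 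Theorem \ref{meta} then gives $n_\alpha \le 100$ for every $\alpha \in S$, i.e.\ $S \subseteq A(100)$, which together with the upper bound yields equality.

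The main obstacle is self-consistency of $S$: for property (1) we need every $\beta_i$ to lie in $S$, and there is no a priori guarantee that the naive candidate $B(100, N)$ is closed under some valid choice of construction data. If the search fails for some $\alpha \in S$, two remedies are available. One can widen the search (over $l$, over $m_i$, over the shape of the $A_i$), since there are typically many admissible instances; alternatively, one can enlarge $S$ by adjoining auxiliary rationals (whose $n_\alpha$ may exceed $100$) solely to serve as $\beta_i$ for the troublesome $\alpha$, provided those auxiliary rationals themselves admit valid construction data. A secondary, purely computational concern is that the base-case bound $\max_{i,\alpha}(\sum A_i + 99 m_i)$ grows with the size of the $A_i$ and the $m_i$; the search should therefore favor small $m_i$ (often $m_i \in \{1,2\}$, as in Proposition \ref{pthree}) and short sets $A_i$ to keep the finite verification tractable.
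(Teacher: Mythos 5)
Your proposal is correct and takes essentially the same route as the paper: the paper computes $B(100,136)$ (finding exactly $4314$ elements) for the upper bound, and proves the lower bound via Theorem \ref{meta} with $X = 100$, $m \in \{1,2\}$ (using $m=1$, $\beta_1 = \alpha - 1$, $A_1 = \{1\}$ for the $772$ rationals with $\alpha \le 2$ and $\alpha - 1$ in the set), computer-found data $\beta_i, A_i$ for the rest, and base cases verified for $100 \le n' \le 240$. One caveat on your second fallback remedy: adjoining to $S$ an auxiliary rational $\beta$ with $n_{\beta} > 100$ is incompatible with keeping $X = 100$, because the base case of Theorem \ref{meta} must be verified for \emph{every} $\alpha \in S$ and every $n$ in $[X, \max_{i,\alpha}(\sum A_i + m_i(X-1))]$, and such a $\beta$ would by definition fail it for some $n \ge 100$; the paper's data shows this remedy is in fact unnecessary, since the search succeeds within $B(100,136)$ itself.
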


\begin{proof}
Writing and running the code for \emph{Create\_\setB{n,N}} with $n = 100$ and $N = 136$ gives $|\setB{n,N}| = 4314$, which provides an upper bound on the number of $\alpha$ with $\nalpha \le 100$. To prove that for all $\alpha \in \setB{n,N}$ we actually have $\alpha \in \setA{n}$, we once again apply the framework of Theorem \ref{meta}. \\

For all $\alpha \in \setB{n,N}$ with $\alpha \le 2$ and $\alpha - 1 \in \setB{n,N}$ (and there are $772$ of these), we choose $m = 1$, $\beta_1 = \alpha - 1$ and $A_1 = \{1 \}$. For all other $\alpha$ we choose $m = 2$. This does mean that, in total, we need to choose more than $7000$ rationals $\beta_i$ and more than $7000$ sets $A_i$. As the margins of this paper are unfortunately too narrow to contain all of this, these $\beta_1, \beta_2, A_1, A_2$ can all be found on the author's GitHub page, along with $\alpha$-partitions of $n'$ for all $\alpha \in \setB{n,N}$ and all $n'$ with $X = 100 \le n' \le \max_{i, \alpha} \big(\sum A_i + m (X-1)\big) = 240$. 
\end{proof}

As a small compensation and to at least show something, in the following table one can find the values of $m, \beta_i, A_i$ for all nine rationals $\alpha$ with $\nalpha \le 70$. We should point out however that this does not comprise a self-contained proof for those $\alpha$ with $\nalpha \le 70$, as it uses rationals $\beta_i \in \setA{100} \setminus \setA{70}$, violating the first property of Theorem \ref{meta}.

\begin{table}[ht]
\centering
\def\arraystretch{1.3}
\begin{tabular}{|c|c|c|c|c|c|}
\hline
$\alpha$ & $m$   & $\beta_1$ & $\beta_2$  & $A_1$     & $A_2$        \\ \hline \hline
$4/5$    & $2$   & $14/15$   & $8/5$      & $\{3 \}$  & $\emptyset$  \\ \hline
$7/12$   & $2$   & $1/2$     & $7/6$      & $\{3 \}$  & $\emptyset$  \\ \hline
$9/5$    & $1$   & $4/5$     & -          & $\{1 \}$  & -            \\ \hline
$11/12$  & $2$   & $7/6$     & $11/6$     & $\{3 \}$  & $\emptyset$  \\ \hline
$13/12$  & $2$   & $3/2$     & $13/6$     & $\{3 \}$  & $\emptyset$  \\ \hline
$19/12$  & $1$   & $7/12$    & -          & $\{1 \}$  & -            \\ \hline
$23/12$  & $1$   & $11/12$   & -          & $\{1 \}$  & -            \\ \hline
$25/12$  & $2$   & $13/6$    & $3/2$      & $\{1 \}$  & $\{1, 3 \}$  \\ \hline
$97/60$  & $1$   & $37/60$   & -          & $\{1 \}$  & -            \\ \hline
\end{tabular}
\end{table}

With the data we have generated, we can also look at the growth rate of $|\setA{n}|$ as a function of $n$. Down below, we have tabulated the number of $\alpha$ for which $\nalpha = n$, for all $n$ with $65 \le n \le 100$. Equivalently, this is the cardinality of the set $\setA{n} \setminus \setA{n-1}$. 

\begin{table}[ht]
\centering
\def\arraystretch{1.3}
\begin{tabular}{|c|l|l|l|l|l|l|l|l|l|l|l|l|l|}
\hline
$n$                             & $65$  & $66$  & $67$  & $68$  & $69$  & $70$  & $71$  & $72$  & $73$  & $74$  & $75$  & $76$  \\ \hline
$|\{\alpha : \nalpha = n\}|$    & $0$   & $2$   & $2$   & $2$   & $1$   & $2$   & $4$   & $5$   & $5$   & $7$   & $7$   & $5$   \\ \hline \hline
$n$                             & $77$  & $78$  & $79$  & $80$  & $81$  & $82$  & $83$  & $84$  & $85$  & $86$  & $87$  & $88$  \\ \hline
$|\{\alpha : \nalpha = n\}|$    & $12$  & $18$  & $22$  & $32$  & $38$  & $41$  & $48$  & $57$  & $76$  & $82$  & $74$  & $97$  \\ \hline \hline
$n$                             & $89$  & $90$  & $91$  & $92$  & $93$  & $94$  & $95$  & $96$  & $97$  & $98$  & $99$  & $100$ \\ \hline
$|\{\alpha : \nalpha = n\}|$    & $117$ & $155$ & $170$ & $194$ & $228$ & $277$ & $306$ & $332$ & $430$ & $473$ & $483$ & $510$ \\ \hline
\end{tabular}
\end{table}
 
The two elements in $\setA{66}$ are $\frac{4}{5}$ and $\frac{11}{12}$, which in turn implies that the two elements in $\setA{67} \setminus \setA{66}$ are $\frac{9}{5}$ and $\frac{23}{12}$. And of course, $\alpha = 1$ shows up as one of the eighteen elements with $\nalpha = 78$. \\

As it turns out, it is possible to use the existence of \mfree $\alpha$-partitions in order to non-trivially relate the growth rates of $|\setA{n}|$ and $|\setB{n}|$ to one another, and show the following inequalities:

\begin{equation*}
e^{\left(\frac{\pi}{\sqrt{3}} - o(1)\right)\sqrt{\frac{n}{\log(n)}}} < |\setA{n}| \le |\setB{n}| < e^{\pi\sqrt{\frac{n}{3}}}
\end{equation*}

As the proofs of these statements are more theoretical in nature and less reliant on computational data, we shall leave this for a follow-up paper.


\begin{thebibliography}{9}

\bibitem{gr}
  R.L. Graham,
  \emph{A theorem on partitions}.
	Journal of the Australian Mathematical Society, vol. 3, 435--441,
  1963.
	Also available \href{https://mathweb.ucsd.edu/~fan/ron/papers/63_02_partitions.pdf?fbclid=IwAR25N2rEGQPAvSqI3mpflolVbV_pSc_1XFEaze0nwTna3sz6wBrystOUZYQ}{here}.

\bibitem{ks}
	G. K\"ohler, J. Spilker,
	\emph{Harmonische Partitionen: Partitionen mit gegebener Summe der Kehrwerte der Teile}.
	Mathematische Semesterberichte, vol. 60,
	2012.	
	
\bibitem{ks2}
	G. K\"ohler, J. Spilker,
	\emph{Harmonische Partitionen mit Nebenbedingungen}.
	Elemente der Mathematik, vol. 70, Issue 1, 7--12,
	2015.	
	Also available \href{https://ems.press/content/serial-article-files/45522}{here}.

\bibitem{acf}
	R. Arce-Nazario, F. Castro, R. Figueroa,
	\emph{On the number of solutions of $\sum_{i=1}^{11} \frac{1}{x_i}$ in distinct odd natural numbers}.
	Journal of Number Theory, vol. 133, Issue 6, 2036--2046,
	2013.	
	Also available \href{https://www.sciencedirect.com/science/article/pii/S0022314X13000206}{here}.
	
\bibitem{shiu}
	P. Shiu,
	\emph{Egyptian Fraction Representations of $1$ with Odd Denominators}.
	The Mathematical Gazette, vol. 93, Issue 527, 271--276,
	2009.		
	
\bibitem{squares}
  M. A. Alekseyev,
  \emph{On Partitions into Squares of Distinct Integers Whose Reciprocals Sum to 1}.
  The Mathematics of Various Entertaining Subjects: Volume 3: The Magic of Mathematics, 213--221,
  2019.
  Also available \href{https://arxiv.org/pdf/1801.05928}{here}.
	
\end{thebibliography}
\end{document}